\documentclass[preprint,12pt,square,comma,sort&compress]{elsarticle}

\usepackage{makecell}   
\usepackage{array}      
\usepackage{geometry}   
\usepackage{bm}
\usepackage[utf8]{inputenc}
\usepackage{indentfirst}
\usepackage{enumitem}
\usepackage{mathrsfs}
\usepackage{dsfont}
\usepackage{amssymb}
\usepackage{stmaryrd}
\usepackage{cite}
\usepackage{amsfonts}
\usepackage{graphicx}
\usepackage{multirow}
\usepackage{amsmath}
\usepackage{pifont}
\usepackage[colorlinks=true]{hyperref}  
\usepackage{cleveref}                    
\usepackage{color}
\usepackage{amsthm} 
\allowdisplaybreaks[4]
\usepackage{caption}

\def\sgn{\mathop{\rm sgn}}
\def\ds{\displaystyle}

\newtheorem{lemma}{Lemma}[section]

\newtheorem{theorem}{Theorem}[section]

\begin{document}

\begin{frontmatter}

\title{Optimal Harvesting of a Stochastic Logistic Model Driven by One-Sided Tempered Stable Process \tnoteref{myr}}
\tnotetext[myr]{The work was supported by the National Natural Science Foundation of China (No. 12071292, 42450269).}

\author{Wenmin Deng}
\author{Fu Zhang\corref{cor1}}
\address{College of Science, University of Shanghai for Science and Technology, Shanghai 200093, China}

\cortext[cor1]{Corresponding author.}
\ead{fuzhang82@gmail.com}

\begin{abstract}

This paper investigates a class of stochastic Logistic harvesting models driven by tempered stable processes, with a one-sided power-law L\'evy measure. We establish threshold conditions for population extinction and persistence, prove the distributional stability of the model, and derive explicit solutions for the optimal harvesting effort and the maximum sustainable yield. We systematically analyze the effects of white noise intensity and L\'evy jump intensity on the optimal harvesting strategy. In particular, by focusing on the intrinsic structural parameters of the L\'evy measure, namely the stability index and the tempering parameter, we elucidate their roles in shaping the optimal strategy and propose four targeted intervention strategies. Numerical simulations are presented to validate the theoretical findings.

\end{abstract}

\begin{keyword}
Tempered stable process; Optimal harvesting; Persistence and extinction; Distributional stability
\end{keyword}

\end{frontmatter}

\thispagestyle{empty}

\rm
\section{Introduction}
\label{1}
Population dynamic modeling constitutes a core foundation of resource management, ecological conservation, and sustainable utilization. Owing to its simplicity and biological interpretability, the classical Logistic model has long been widely used to describe single-species population growth under environmental carrying capacity constraints, and has been further extended to the study of optimal harvesting strategies for renewable resources such as fisheries and forestry \citep{y1,y2,y3,y4,y5}.

However, traditional deterministic models or stochastic models driven solely by Gaussian noise often neglect the impact of sudden environmental perturbations, such as extreme climate events, disease outbreaks, pollution accidents, or abrupt policy changes. Although these disturbances may occur infrequently, they can exert significant shocks on population size and may even trigger irreversible collapse. More importantly, in modern ecological systems, anthropogenic interventions have become drivers as important as, or even more dominant than, natural disturbances. For example, billions of fish are released annually worldwide to restore depleted fisheries; endangered species are replenished through habitat protection and reintroduction programs; and policy instruments such as marine protected areas and fishing moratoria directly alter population growth trajectories. These interventions are neither continuous and smooth nor sparse and isolated, but instead exhibit complex features across multiple scales and asymmetries, predominantly characterized by positive inputs. They include both routine small-scale supplementation and sporadic large-scale actions. Moreover, from a modeling perspective, the timing and magnitude of these interventions can be reasonably regarded as random variables, even though they may be somewhat planned in actual operation. This is because management practice involves many uncontrollable factors (such as implementation delays, environmental constraints, and budget fluctuations), which make the actual outcome of interventions inherently random. How to mathematically unify and characterize such a spectrum of positive perturbation mechanisms, and to assess their impacts on resource management strategies, has become a key problem in mathematical ecology.

Tempered stable processes provide an effective framework for modeling such a spectrum of positive perturbation mechanisms. In particular, stochastic differential equations driven by L\'evy jump processes offer powerful tools for describing non-Gaussian, discontinuous environmental noise. Among various jump models, tempered stable processes have attracted considerable attention due to their dual characteristics: on one hand, their power-law kernel \( z^{-1-\beta} \) captures the aggregation and heavy-tailed behavior of small and moderate jumps; on the other hand, the exponential tempering term \( e^{-\lambda z} \) suppresses the probability of extreme large jumps, ensuring finite moments of all orders and avoiding the theoretical and practical difficulties caused by infinite variance in pure \(\alpha\)-stable processes. This ``heavy-tailed but controllable'' structure enables tempered stable processes to capture frequent small-to-moderate perturbations observed in real systems while excluding physically unrealistic catastrophic jumps, making them more consistent with empirical observations in physical and biological systems.

This modeling advantage has established tempered stable processes as key tools across multiple disciplines. In physics, tempered stable processes (often referred to as tempered L\'evy flights) are used to model anomalous diffusion in turbulent flows, capturing both long-range jump behavior and finite variance at large time scales, thereby reconciling with classical diffusion theory \citep{y7,y8}. In finance, tempered stable processes, such as the CGMY model, allow flexible parameter control to accurately fit the ``sharp peak and heavy tail'' features of asset returns, while ensuring mathematical rigor in risk-neutral pricing due to the existence of exponential moments \citep{y9,y10}. In seismology, tempered power-law distributions are introduced to impose upper bounds on earthquake energy, leading to hazard models more consistent with geophysical reality \citep{y11}. Notably, the core idea of tempering, exponentially truncating heavy tails to balance extreme events and system stability, has also received empirical support in ecological and epidemiological contexts. For instance, \citep{y12} analyzed human mobility data and found that short-time displacement distributions exhibit ``suppressed heavy tails'' statistically consistent with tempered stable processes, providing mechanistic insights into superdiffusive disease spread under spatial constraints. Despite these advances, to the best of our knowledge, this idea has not yet been concretely formulated as a tempered stable L\'evy process with explicit analytical structure and applied to optimal control problems in population dynamics.

Although previous studies have investigated optimal control problems for stochastic population models with jumps, most restrict the jump component to finite activity L\'evy processes, particularly compound Poisson processes (see, e.g., \citep{y13,y14,y15}).
These models typically assume bounded jump amplitudes and a L\'evy measure supported on a measurable subset $\mathbb{Y} \subset (0,\infty)$ satisfying $\nu(\mathbb{Y}) < \infty$.
Such finite activity models essentially characterize disturbances as sparse and discrete events, with relatively long intervals between successive jumps. They are therefore well suited for describing rare natural disasters or abrupt policy shocks. However, they are less effective at capturing continuous, high-frequency human management activities, which typically involve a large number of small but cumulatively significant interventions.

In stark contrast, the innovation of this paper lies in introducing jump terms with different statistical characteristics. Specifically, we consider a special type of L\'evy process with infinite activity, a one-sided tempered stable process, where its L\'evy measure \( \nu \) does not satisfy \( \nu(\mathbb{Y}) < \infty \), but instead follows a well-defined one-sided power-law form (see the definition by \citep{y16}):
\begin{equation}\label{eq:levy-measure}
\nu(dz) = \begin{cases}  \dfrac{e^{-\lambda z}}{z^{1+\beta}} \, \mathrm{d}z, & \text{if } z > 0, \\ 0, & \text{if } z \leq 0, \end{cases}
\end{equation}
where \( \beta\in(0,2) \) is the stability index and \( \lambda \) is the tempering parameter. Since the model (\ref{f01}) introduces \( \sigma \) as the overall intensity coefficient of the jump disturbance, we clearly separate the parameters characterizing disturbance properties into two parts: the shape parameters \( \beta \) and \( \lambda \), which control the statistical structure, and the scale parameter \( \sigma \), which controls the overall intensity.

In the ecological context, this one-sided structure with only positive jumps has unique biological interpretability. It is well-suited to describe intervention events that cause non-negative, sudden increases in population size, and its ``infinite activity'' feature allows it to capture disturbances across different scales. More importantly, the parameters \( \beta \) and \( \lambda \) have clear managerial interpretations, which can be intuitively understood through practices such as ``prolific restocking'':
\begin{itemize}
    \item The parameter \( \beta \) mainly controls the density of small jumps and characterizes the ``implementation rate'' of restocking activities. When \( \beta \) is large, the system corresponds to a ``high-frequency, small-scale'' management paradigm (e.g., daily automated releases of a small number of fry, forming a ``drip infusion'' type of maintenance). When \( \beta \) is small, the system corresponds to a ``low-frequency, relatively large-scale'' management paradigm (e.g., large-scale restocking only once or twice in the breeding season, forming a ``pulse'' type of supplementation).
    \item The parameter \( \lambda \) mainly controls the suppression of large jumps and characterizes the ``scale constraint'' of a single restocking action. When \( \lambda \) is large, the system is in a ``scale-constrained'' state (e.g., restricted by hatchery capacity, regulations limit the amount of restocking per action); when \( \lambda \) is small, the system has the flexibility to ``large-scale emergency-ready'' (e.g., in the case of a population collapse, emergency restocking beyond normal capacity).
\end{itemize}

Compared to previous studies that restrict jump terms to L\'evy processes with finite activity \( (\nu(\mathbb{Y}) < \infty) \), the contribution of this paper lies in twofold deepening: at the modeling level, it introduces a one-sided tempered stable process with infinite activity \( (\nu(\mathbb{Y}) = \infty) \), thereby providing a unified framework in population dynamics to model a spectrum of interventions ranging from high-frequency small-scale to high-frequency large-scale; at the analytical level, it shifts the focus from the ``overall intensity'' of jumps to their ``intrinsic statistical structure''. Specifically, existing work usually treats L\'evy noise as a general disturbance term without explicitly specifying the form of its measure or addressing key parameters that determine the shape of the jump distribution. In contrast, this paper, through a L\'evy measure with a clear analytical form, identifies and systematically analyzes two core structural parameters, \( \beta \) and \( \lambda \). This allows us to quantitatively address a question that past models have struggled with: how does the statistical pattern of human intervention activities shape optimal harvesting strategies and determine the system's sustainable output potential?

Based on this motivation, this paper establishes the following stochastic logistic
population model driven by Brownian motion and L\'evy jump noise with a one-sided
tempered stable L\'evy measure:
\begin{equation}\label{f01}
\mathrm{d}x(t)
=
x(t)[a-h-bx(t)]dt
+\tau x(t)\mathrm{d}B(t)
+\sigma x(t-)\mathrm{d}L(t),
\end{equation}
where \(x(t)\) denotes the population size, \(a>0\) is the intrinsic growth rate,
\(h>0\) is the per capita harvesting rate, \(b>0\) is the intra-species competition
coefficient, and \(\tau^2\) is the intensity of the continuous environmental noise.
Here \(B(t)\) is a standard Brownian motion, \(\sigma>0\) measures the intensity
of the jump noise, and \(x(t-)\) denotes the left-hand limit of \(x(t)\) at time \(t\).
The jump process \(L(t)\) is assumed to be a spectrally positive L\'evy process with
canonical small-jump compensation, namely,
\[
\mathrm{d}L(t)
=
\int_0^{+\infty} z\,N(\mathrm{d}t,\mathrm{d}z)
-
\int_0^1 z\,\nu(\mathrm{d}z)\,\mathrm{d}t,
\]
where \(N(\mathrm{d}t,\mathrm{d}z)\) is a Poisson random measure on \((0,\infty)\) with intensity
measure \(\nu(\mathrm{d}z)\mathrm{d}t\), and \(\nu\) is the L\'evy measure given by (\ref{eq:levy-measure}). Equivalently,
\[
\mathrm{d}L(t)
=
\int_0^{+\infty} z\,\widetilde N(\mathrm{d}t,\mathrm{d}z)
+
\int_1^{+\infty} z\,\nu(\mathrm{d}z)\,\mathrm{d}t,
\]
where \(\widetilde N(\mathrm{d}t,\mathrm{d}z)=N(\mathrm{d}t,\mathrm{d}z)-\nu(\mathrm{d}z)\mathrm{d}t\) is the compensated Poisson
random measure. Throughout this paper, we assume that \(B(t)\) and \(N(\mathrm{d}t,\mathrm{d}z)\),
or equivalently \(B(t)\) and \(\widetilde N(\mathrm{d}t,\mathrm{d}z)\), are independent.

Within the above framework, the central scientific question addressed in this paper is to determine the optimal harvesting effort (OHE) $h^*$ that maximizes the expected sustainable yield (ESY)
\begin{equation*}
Y(h) = \lim_{t \to +\infty} \mathbb{E}\bigl[h\,x(t)\bigr],
\end{equation*}
subject to the constraint of population persistence. To solve this problem, this paper explores the following interrelated aspects:

\begin{enumerate}
    \item The impact mechanism of the control variable: how does the harvesting effort \( h \), as a direct management decision variable, affect the long-term dynamics of the population size \( x(t) \) and ESY \( Y(h) \)? This forms the theoretical foundation for solving OHE \( h^* \).

    \item The regulatory effect of traditional noise: how do the intensities of continuous environmental noise (\( \tau^2 \)) and jump noise (\( \sigma \)) affect OHE \( h^* \) and the maximum expected sustainable yield (MESY) \( Y^* \)?

    \item The key role of the intervention statistical structure: in particular, how do the statistical structure parameters of human intervention-\( \beta \) and \( \lambda \)-quantitatively alter \( h^* \) and \( Y^* \)? That is, how do optimal strategies respond when the management paradigm shifts from ``high-frequency small-scale'' (small \( \beta \)) to "low-frequency large-scale" (large \( \beta \)), or from ``allowing large-scale emergency responses'' (small \( \lambda \)) to ``scale-limited'' (large \( \lambda \))?
\end{enumerate}

Through this series of analyses, this paper aims to uncover a core mechanism: in a complex stochastic environment, the intrinsic statistical structure of management activities (characterized by \( \beta, \lambda \)) and traditional environmental fluctuations (characterized by \( \tau, \sigma \)) jointly form a ``random force field'', which not only determines the harvestability of the population but also fundamentally requires the optimal management strategy \( h^* \) to match it. Understanding the impact of \( h \) on \( x(t) \) and \( Y(h) \) is the key to understanding how this ``matching'' can be achieved.

This paper is structured as follows: Section 2 proves the existence of a unique positive solution to the mixed noise-driven logistic model (Lemma 2.1) and establishes the sufficient conditions for extinction and persistence (Theorem 2.1); Section 3 proves that the model exhibits distributional stability (Theorem 3.1); Section 4 derives explicit expressions for OHE and MESY (Theorem 4.1); Section 5 conducts numerical simulations to validate the theoretical results; Section 6 provides a comprehensive analysis and summary of the paper and discusses several open problems for future research. This study aims to provide deeper theoretical insights and decision-making support for the robust, adaptive, and sustainable use of renewable resources in complex, multi-scale environments influenced by institutional human interventions.
\medskip

\rm
\section{Extinction and Persistence}
\label{2}

\medskip

In this paper, we consider a complete probability space \((\Omega, \mathcal{F}, \{\mathcal{F}_t\}_{t \geq 0}, P)\), where the filtration \(\{\mathcal{F}_t\}_{t \geq 0}\) is the filtration generated by Brownian motion $B(t)$ and the Poisson random measure  $N({\rm d}t,{\rm d}z)$,
and augmented to satisfy the usual conditions (i.e., it is right-continuous and complete). We denote the set of positive real numbers by \(\mathbb{R}_+\). As motivated in the Introduction, the positive jumps in the model represent stochastic interventions (e.g., restocking) with random timing and magnitude. Under the It\^{o}-L\'evy framework, the compensated jump measure introduces a drift correction term, so the net effect of jumps on population growth is not necessarily positive.

This section is devoted to the theoretical analysis of the dynamics of model (\ref{f01}). We first prove the existence and uniqueness of a global positive solution (Lemma 2.1), and then, by introducing a key threshold parameter \(\Phi\), we establish sufficient conditions that determine the extinction or persistence of the population (Theorem 2.1).

\medskip

\begin{lemma}
For any initial data $x(0) \in \mathbb{R}_+ $, model (\ref{f01}) has a unique global positive solution $x(t)$ for all $t\geq 0$ almost surely (\text{a.s.}). That is to say,
\begin{equation*}
P\{x(t) \in \mathbb{R}_+\;\text{\rm for all }t \geq 0\} = 1.
\end{equation*}
\end{lemma}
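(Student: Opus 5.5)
Because the jump coefficient satisfies $1+\sigma z>0$ for every $z$ in the support $(0,\infty)$ of $\nu$, I will obtain existence and positivity by passing to logarithmic coordinates and solving explicitly, rather than by a stopping-time/Lyapunov argument.

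\emph{Step 1 (the equation for $\ln x$).} Write (\ref{f01}) with the compensated measure:
\[
\mathrm{d}x=x(t)\Bigl[a-h+\sigma\!\int_1^\infty z\,\nu(\mathrm{d}z)-bx(t)\Bigr]\mathrm{d}t+\tau x(t)\,\mathrm{d}B(t)+\int_0^\infty \sigma z\,x(t-)\,\widetilde N(\mathrm{d}t,\mathrm{d}z).
\]
Apply It\^o's formula for L\'evy-type processes to $\ln x$, formally at first and then justified in Step 4. Setting
\[
c:=a-h-\tfrac{\tau^2}{2}+\sigma\!\int_1^\infty z\,\nu(\mathrm{d}z)-\int_0^\infty\bigl(\sigma z-\ln(1+\sigma z)\bigr)\nu(\mathrm{d}z),
\]
this gives
\[
\mathrm{d}\ln x=\bigl(c-bx\bigr)\mathrm{d}t+\tau\,\mathrm{d}B+\int_0^\infty\ln(1+\sigma z)\,\widetilde N(\mathrm{d}t,\mathrm{d}z).
\]
The integrability checks are straightforward. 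Near $0$ we have $\sigma z-\ln(1+\sigma z)=O(z^2)$, and $\int_0^1 z^{1-\beta}\,\mathrm{d}z<\infty$ because $\beta<2$. Near $\infty$ the factor $e^{-\lambda z}$ controls polynomial and logarithmic growth, assuming $\lambda>0$ as is implicit in tempering. Also $\int\ln^2(1+\sigma z)\,\nu(\mathrm{d}z)<\infty$. Hence $c$ is a finite constant, and
\[
M(t)=\tau B(t)+\int_0^t\!\int_0^\infty\ln(1+\sigma z)\,\widetilde N(\mathrm{d}s,\mathrm{d}z)
\]
is a well-defined square-integrable L\'evy martingale.

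\emph{Step 2 (explicit solution).} Define the geometric L\'evy process $\Psi(t)=\exp\{ct+M(t)\}$, which is positive, c\`adl\`ag and finite for all $t$. The candidate solution solves the Bernoulli-type equation $\mathrm{d}\ln x=(c-bx)\mathrm{d}t+\mathrm{d}M$, so $1/x$ satisfies a linear equation along paths. This yields
\[
x(t)=\frac{\Psi(t)}{\dfrac{1}{x(0)}+b\displaystyle\int_0^t\Psi(s)\,\mathrm{d}s}.
\]
The denominator is finite and at least $1/x(0)>0$, so $x(t)\in\mathbb{R}_+$ for all $t\ge0$ almost surely, and $x$ is global. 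I will then verify, by It\^o's product rule applied to $\Psi(t)\cdot\bigl(1/x(0)+b\int_0^t\Psi\bigr)^{-1}$, that this process satisfies (\ref{f01}). The second factor has finite variation and is continuous, so no cross-jump terms appear, and $\Psi$ itself solves $\mathrm{d}\Psi=\Psi\bigl[(c+\tfrac{\tau^2}{2}+\int(\sigma z-\ln(1+\sigma z))\nu)\,\mathrm{d}t+\tau\,\mathrm{d}B+\int\sigma z\,\widetilde N\bigr]$.

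\emph{Step 3 (uniqueness).} The coefficients are locally Lipschitz: the drift $x(a-h-bx)$ is a polynomial, and the jump coefficient $\sigma zx$ satisfies $\int|\sigma z(x-y)|^2\nu(\mathrm{d}z)\le C|x-y|^2$, since $\int z^2\nu<\infty$. The large-jump drift term is linear. Standard pathwise uniqueness up to explosion time for SDEs with Poisson random measures (e.g.\ Applebaum, Ikeda--Watanabe) then applies. Any solution coincides with the explicit one up to the exit time from $[1/k,k]$, and that time tends to $\infty$ because the explicit solution neither explodes nor hits $0$.

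\emph{Main obstacle.} The delicate point is the infinite activity of $\nu$. Both the finiteness of the compensator $\int(\sigma z-\ln(1+\sigma z))\nu(\mathrm{d}z)$ and the well-posedness of $\int\ln(1+\sigma z)\,\widetilde N$ require the small-jump estimate $\int_0^1 z^2 z^{-1-\beta}\,\mathrm{d}z<\infty$, which is exactly where $\beta<2$ enters, while tempering handles the large jumps. Once these integrals are controlled, the rest follows the standard logistic argument. As a fallback, the usual Lyapunov approach with $V(x)=\sqrt{x}-1-\tfrac12\ln x$ and stopping times $\tau_k$ also works under the same integrability bounds.
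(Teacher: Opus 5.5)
Your argument is correct, but it takes a different route from the paper. The paper uses the standard Lyapunov/stopping-time method:
\begin{itemize}
\item local existence and uniqueness up to the explosion time, from the local Lipschitz property of the coefficients;
\item the function $V_1(x)=x^\theta-\ln x-\theta^{-1}(1+\ln\theta)$, $\theta\in(0,1)$, with a bound $\mathcal{L}V_1\le C$ obtained by estimating four jump integrals $C_1,\dots,C_4$;
\item the usual contradiction argument showing $\tau_k\to\infty$.
\end{itemize}
Your fallback $\sqrt{x}-1-\frac12\ln x$ is a Lyapunov function of the same type.

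You instead exploit the Bernoulli structure. The product-rule verification in Step~2 shows directly that $x(t)=\Psi(t)/\bigl(1/x(0)+b\int_0^t\Psi(s)\,\mathrm{d}s\bigr)$ solves (\ref{f01}). Positivity and non-explosion can then be read off the formula. Only uniqueness needs the abstract local-Lipschitz theory, and your localization there is fine: any other solution agrees with the explicit one up to its exit time from $[1/k,k]$, and that time tends to infinity.

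What your route buys:
\begin{itemize}
\item The integrability requirements reduce to finiteness of $\int(\sigma z-\ln(1+\sigma z))\,\nu(\mathrm{d}z)$, $\int\ln^2(1+\sigma z)\,\nu(\mathrm{d}z)$, $\int z^2\,\nu(\mathrm{d}z)$ and $\int_1^\infty z\,\nu(\mathrm{d}z)$. No generator estimate is needed.
\item Your constant $c$ equals the paper's $\Phi$. So the identity $\ln x(t)=\ln x(0)+\Phi t-b\int_0^t x(s)\,\mathrm{d}s+M(t)$ behind Theorem~2.1 comes for free. So does the cancellation of all noise in $\ln x-\ln\widetilde x$ used in Theorem~3.1.
\end{itemize}

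What the paper's route buys: it does not rely on a closed-form solution. It therefore carries over to models with no Bernoulli linearization, such as the multi-species systems mentioned in Section~6.

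One presentational point: you promise a ``Step 4'' justifying the It\^o computation for $\ln x$, but none exists and none is needed. That derivation is only heuristic. The rigorous content is the verification in Step~2 together with uniqueness in Step~3.
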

\medskip
\begin{proof}[$\mathbf{Proof\ of\ Lemma\ 2.1}$] \rm Note that  the coefficients of the model (\ref{f01}) are locally Lipschitz continuous on $ \mathbb{R}_+ $, therefore, for any given initial value $ x(0) \in \mathbb{R}_+ $, there exists a unique local solution $ x(t) $, defined on the time interval $[0, \tau_e)$, where  $ \tau_e = \inf{\{t>0: \lim\sup_{s\to t-}|x(s)|=\infty\}} $ is the explosion time of \((x(t))\).

Next, we prove $ \tau_e = \infty $ a.s. Let \( k_0 > 0 \) such that \( x(0) < k_0 \). For each integer \( k > k_0 \), define:

\begin{equation*}
\begin{array}{rcl}
\tau_k := \inf \left\{ t \in [0, \tau_e) : x(t) \notin \left( \frac{1}{k}, k \right) \right\},
\end{array}
\end{equation*}
with the convention \(\inf \{\emptyset\} = \infty\). Clearly, \(\tau_k\) is increases with \(k\), and Set \(\tau_\infty = \lim_{k \to \infty} \tau_k\), so \(\tau_\infty \leq \tau_e\) a.s. If we show \(\tau_\infty = \infty\) a.s., then \(\tau_e = \infty\) a.s.

\noindent Consider the Lyapunov function:
\begin{equation*}
\begin{array}{rcl}
V_1(x) = x^\theta - \ln x - \theta^{-1}(1 + \ln \theta), \quad x \in \mathbb{R}_+,\quad \theta \in (0, 1),
\end{array}
\end{equation*}
which is nonnegative. For any \( T > 0 \) and \( t \in [0, T] \), applying the It\^{o}-L\'evy formula:

\begin{equation}\label{f18}
\begin{array}{rcl}
\ds V_1(x(t \wedge \tau_k)) = V_1(x(0)) + \int_0^{t \wedge \tau_k} \mathcal{L}V_1(x(s)) \, \mathrm{d}s + M_1(t \wedge \tau_k),
\end{array}
\end{equation}
where
\begin{equation*}
\begin{array}{rcl}
\ds M_1(t \wedge \tau_k)&=&\ds\int_0^{t \wedge \tau_k} \left(\theta\tau x^{\theta}(s)-\tau )  \right)\, \mathrm{d}B(s)\\
&\quad&\ds+\int_0^{t \wedge \tau_k}\int_0^{+\infty} \left[ x^\theta(s^{-})((1+\sigma z)^\theta - 1) - \ln(1+\sigma z) \right] \widetilde{N}(\mathrm{d}s,\,\mathrm{d}z).\\
\end{array}
\end{equation*}
The generator \( \mathcal{L}_1V(x) \) is:
\begin{equation*}
\begin{array}{rcl}
\mathcal{L}V_1(x) &=&\ds (\theta x^{\theta} - 1) \cdot (a - h+\sigma
\int_1^{+\infty} z\,\nu(\mathrm{d}z)- bx ) + \frac{1}{2} \tau ^2 \cdot (\theta(\theta-1)x^{\theta} + 1) \\
&\quad&\ds+ \int_0^{+\infty} \left[ V_1(x + \sigma xz) - V_1(x) - V_1^{'}(x) \cdot \sigma xz \right] \nu(\mathrm{d}z)\\
&=& (\theta x^{\theta} - 1) \cdot (a - h- bx ) + \frac{1}{2} \tau ^2 \cdot (\theta(\theta-1)x^{\theta} + 1) \\
&\quad&\ds+ \int_0^{+\infty} \left[ V_1(x + \sigma xz) - V_1(x) - V_1^{'}(x) \cdot \sigma xz\mathbf{1}_{\{0<z\leq 1\}}  \right] \frac{e^{-\lambda z}}{z^{1+\beta}} \, \mathrm{d}z,\\
\end{array}
\end{equation*}
After simplification:
\begin{equation*}
\begin{array}{rcl}
\mathcal{L}V_1(x) &=& a\theta x^\theta - h\theta x^\theta- b\theta x^{\theta+1}   - a + h + bx+ \frac{1}{2}\tau^2\theta(\theta-1)x^\theta + \frac{1}{2}\tau^2 \\
&\quad&\ds+ \int_0^{+\infty} \left[ x^\theta((1+\sigma z)^\theta - 1) - \ln(1+\sigma z) - (\theta x^{\theta} - 1)\sigma z\mathbf{1}_{\{0<z\leq 1\}}  \right] \frac{ e^{-\lambda z}}{z^{1+\beta}} \, \mathrm{d}z, \\
\end{array}
\end{equation*}
Let $y := \sigma z$. Then,
\begin{equation*}
\begin{array}{rcl}
\mathcal{L}V_1(x)
&=& \ds a\theta x^\theta - h\theta x^\theta- b\theta x^{\theta+1}   - a + h + bx+ \frac{1}{2}\tau^2\theta(\theta-1)x^\theta + \frac{1}{2}\tau^2 \\
&\quad&\ds+ C_1+C_2+(C_3+C_4)x^{\theta}, \\
\end{array}
\end{equation*}
where
\begin{equation*}
\begin{array}{rcl}
\ds C_1 := \sigma^\beta \int_0^\sigma (y - \ln(1 + y)) \frac{e^{-\lambda y/\sigma}}{y^{1+\beta}} \,\mathrm{d}y,
\end{array}
\end{equation*}
\begin{equation*}
\begin{array}{rcl}
\ds C_2 := \sigma^\beta \int_\sigma^{+\infty} \ln(1 + y) \frac{ e^{-\lambda y/\sigma}}{y^{1+\beta}}\, \mathrm{d}y,
\end{array}
\end{equation*}
\begin{equation*}
\begin{array}{rcl}
\ds C_3 := \sigma^\beta \int_0^\sigma \left( (1 + y)^\theta - 1 - \theta y \right) \frac{e^{-\lambda y/\sigma}}{y^{1+\beta}} \,\mathrm{d}y,
\end{array}
\end{equation*}
\begin{equation*}
\begin{array}{rcl}
\ds C_4 := \sigma^\beta \int_\sigma^{+\infty} \left[ (1 + y)^\theta - 1 \right] \frac{e^{-\lambda y/\sigma}}{y^{1+\beta}} \,\mathrm{d}y.
\end{array}
\end{equation*}
Define \( H_1(y) := \frac{y - \ln(1 + y)}{y^2}\) for \( y \in (0, \sigma] \). Elementary calculus shows
\begin{equation*}
\begin{array}{rcl}
\ds\frac{\sigma - \ln(1 + \sigma)}{\sigma^2} \leq H_1(y) \leq \frac{1}{2}, \quad y \in (0, \sigma].
\end{array}
\end{equation*}
So \( H_1(y) \) is bounded by a constant \( \bar{C}_1>0\). Since
$
0 < e^{-\lambda y/\sigma} \le 1, \; y \in (0,\sigma],
$
then
\begin{equation*}
C_1 \le \sigma^\beta \cdot \bar{C}_1 \cdot \int_0^\sigma \frac{1}{y^{\beta - 1}}  \,\mathrm{d}y
=\frac{\, \bar{C}_1 \, \sigma^2}{2 - \beta} < \infty.
\end{equation*}
For $C_2$, define
\[
H_2(y):=e^{-\lambda y/\sigma}\frac{\ln(1+y)}{y^{1+\beta}},\qquad
\bar{H}_2(y):=\frac{1}{y^{q}},\quad q>1,\ y\ge \sigma.
\]
Clearly, $H_2(y)\ge0$ and $\bar{H}_2(y)>0$. Moreover,
\[
\lim_{y\to\infty}\frac{H_2(y)}{\bar{H}_2(y)}
=\lim_{y\to\infty}y^{q}e^{-\lambda y/\sigma}\frac{\ln(1+y)}{y^{1+\beta}}=0.
\]
Since $\int_{\sigma}^{\infty} \bar{H}_2(y)\,\mathrm{d}y<\infty$ for $q>1$, the limit comparison test implies that $\int_{\sigma}^{\infty} H_2(y)\,\mathrm{d}y<\infty$. Hence $C_2<\infty$.
By bounding \( C_3 \) (similar to \( C_1 \)) and noting \( C_4 \) converges (similar to \( C_2 \)), we find a positive constant \( C>0\) such that
\begin{equation*}
\begin{array}{rcl}
\ds \mathcal{L}V_1(x) \leq C \quad \text{for all } x \in \mathbb{R}_+.
\end{array}
\end{equation*}
Thus, from (\ref{f18}):
\begin{equation}\label{f28}
\begin{array}{rcl}
\ds \mathbb{E}V_1(x(T \wedge \tau_k)) \leq V_1(x(0)) + C\mathbb{E}(\tau_k \wedge T) \leq V_1(x(0)) + CT.
\end{array}
\end{equation}
Now suppose for contradiction that \( P(\tau_\infty \leq T) > 0 \). Then for sufficiently large \( k \), \( P(\tau_k \leq T)> 0 \). On the event \(\omega \in \{\tau_k \leq T\}\), we have
\begin{equation*}
\begin{array}{rcl}
\ds x(\tau_k, \omega) \geq k \quad \text{or} \quad x(\tau_k, \omega) \leq \frac{1}{k}.
\end{array}
\end{equation*}
Since \( V_1(x) \) decreases on \( x \in (0, (1/\theta)^{1/\theta}) \) and increases on \( x \in [(1/\theta)^{1/\theta}, +\infty) \),
\begin{equation}\label{f30}
\begin{array}{rcl}
\ds V_1(x(\tau_k, \omega)) \geq \left( k^\theta - \log k - \theta^{-1}(1 + \log \theta) \right) \wedge \left( \frac{1}{k^\theta} + \log k - \theta^{-1}(1 + \log \theta) \right).
\end{array}
\end{equation}
Combining (\ref{f28}) and (\ref{f30}):
\begin{equation*}
\begin{array}{rcl}
&\left( k^\theta - \log k - \theta^{-1}(1 + \log \theta) \right) \wedge \left( \frac{1}{k^\theta} + \log k - \theta^{-1}(1 + \log \theta) \right) P(\tau_k \leq T)\\
&\leq \ds \mathbb{E}(V_1(x(\tau_k, \omega)) I_{\{\tau_k \leq T\}}) \leq V_1(x(0)) + CT.\\
\end{array}
\end{equation*}
Letting \( k \to \infty \), the left side tends to infinity unless \(P(\tau_k \leq T) \to 0\), which is a contradiction. Hence,
\begin{equation*}
\begin{array}{rcl}
P(\tau_\infty = \infty) = 1.
\end{array}
\end{equation*}
By the definition of $\tau_k$ and $\tau_\infty = \infty$ a.s., we have $x(t) > 0$ for all $t \ge 0$ a.s.

Therefore, model (\ref{f01}) admits a unique global positive solution \( x(t) \in \mathbb{R}_+ \) on \( t \geq 0 \).
\end{proof}
\medskip

\rm
\begin{lemma}
(Liu, Wang and Wu \citep{a3}).
Let $\Psi(t)$ be a continuous process valued on $\mathbb{R}_+$.
\medskip

\noindent$\rm(\mathfrak{A})$ If there exist some constants $T>0$, $\lambda_0>0$, $\lambda$, $\sigma_i$ and $\lambda_{i}$ such that for all $t\geq T$,
\begin{equation*}
\ln \Psi(t)\leq \lambda t-\lambda_0\int_0^t\Psi(s)\,\mathrm{d}s+\sum_{i=1}^n\sigma_i B_i(t)+\sum_{i=1}^n\lambda_i\int_0^t\int_0^{+\infty}\ln(1+\gamma_{i}(v))\widetilde{N}(\mathrm{d}s,\,\mathrm{d}v),\ a.s.,
\end{equation*}
then
\begin{equation*}
\left\{\begin{array}{l}
\ds\limsup_{t\rightarrow+\infty}t^{-1}\int_0^t\Psi(s)\,\mathrm{d}s\leq \lambda/\lambda_0,\quad a.s., \quad if\ \lambda\geq0,\\
\ds\lim_{t\rightarrow+\infty}\Psi(t)=0,\quad a.s., \quad if\ \lambda<0.
\end{array}
\right.
\end{equation*}
$\rm(\mathfrak{B})$ If there exist some constants $T>0$, $\lambda_0>0$, $\lambda>0$, $\sigma_i$ and $\lambda_{i}$ such that for all $t\geq T$,
\begin{equation*}
\ln \Psi(t)\geq\lambda t-\lambda_0\int_0^t\Psi(s)\mathrm{d}s+\sum_{i=1}^n\sigma_i B_i(t)+\sum_{i=1}^n\lambda_i\int_0^t\int_0^{+\infty}\ln(1+\gamma_{i}(v))\widetilde{N}(\mathrm{d}s,\,\mathrm{d}v),\ a.s.,
\end{equation*}
then
\begin{equation*}
\liminf_{t\rightarrow+\infty}t^{-1}\int_0^t\Psi(s)\,\mathrm{d}s\geq \lambda/\lambda_0,\quad a.s.\\
\quad
\end{equation*}
\end{lemma}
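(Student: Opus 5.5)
The approach is to reduce both parts of Lemma 2.2 to strong laws of large numbers for the noise terms plus elementary comparison arguments on $\Phi(t):=\int_0^t\Psi(s)\,\mathrm{d}s$. Write the noise as
\begin{equation*}
M(t):=\sum_{i=1}^n\sigma_i B_i(t)+\sum_{i=1}^n\lambda_i\int_0^t\int_0^{+\infty}\ln(1+\gamma_i(v))\widetilde N(\mathrm{d}s,\mathrm{d}v).
\end{equation*}
We assume the standing integrability condition of \citep{a3}, namely $\int(\ln(1+\gamma_i(v)))^2\nu(\mathrm{d}v)<\infty$. Under it, each summand is a square-integrable local martingale with quadratic variation growing at most linearly. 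The strong law of large numbers for local martingales (Lipster's theorem) then gives $M(t)/t\to0$ a.s.

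Thus for every $\varepsilon>0$ there is a random $T_\varepsilon\ge T$ such that $|M(t)|\le\varepsilon t$ for $t\ge T_\varepsilon$. Since $\Phi'(t)=\Psi(t)$, the hypothesis of $(\mathfrak{A})$ becomes the differential inequality
\begin{equation*}
\ln\Phi'(t)\le(\lambda+\varepsilon)t-\lambda_0\Phi(t),
\end{equation*}
that is, $e^{\lambda_0\Phi(t)}\Phi'(t)\le e^{(\lambda+\varepsilon)t}$.

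\textbf{Part $(\mathfrak{A})$, case $\lambda\ge0$.} Integrate from $T_\varepsilon$ to $t$ to get
\begin{equation*}
\lambda_0^{-1}\bigl(e^{\lambda_0\Phi(t)}-e^{\lambda_0\Phi(T_\varepsilon)}\bigr)\le(\lambda+\varepsilon)^{-1}\bigl(e^{(\lambda+\varepsilon)t}-e^{(\lambda+\varepsilon)T_\varepsilon}\bigr).
\end{equation*}
Taking logarithms yields $\lambda_0\Phi(t)\le(\lambda+\varepsilon)t+O(1)$. Dividing by $t$ and letting $t\to\infty$, then $\varepsilon\downarrow0$, gives $\limsup t^{-1}\Phi(t)\le\lambda/\lambda_0$.

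\textbf{Part $(\mathfrak{A})$, case $\lambda<0$.} Here I use $\Phi\ge0$ directly: $\ln\Psi(t)\le\lambda t+M(t)$. Choosing $\varepsilon<|\lambda|/2$ gives $\ln\Psi(t)\le(\lambda/2)t$ for large $t$, so $\Psi(t)\to0$ a.s.

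\textbf{Part $(\mathfrak{B})$.} Take $\varepsilon<\lambda$. The lower inequality gives $e^{\lambda_0\Phi}\Phi'\ge e^{(\lambda-\varepsilon)t}$ for $t\ge T_\varepsilon$. Integrating,
\begin{equation*}
e^{\lambda_0\Phi(t)}\ge e^{\lambda_0\Phi(T_\varepsilon)}+\frac{\lambda_0}{\lambda-\varepsilon}\bigl(e^{(\lambda-\varepsilon)t}-e^{(\lambda-\varepsilon)T_\varepsilon}\bigr).
\end{equation*}
The right side is eventually at least $c\,e^{(\lambda-\varepsilon)t}$ for some $c>0$. Hence $\lambda_0\Phi(t)\ge(\lambda-\varepsilon)t+\ln c$, and so $\liminf t^{-1}\Phi(t)\ge(\lambda-\varepsilon)/\lambda_0$. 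Letting $\varepsilon\downarrow0$ finishes the proof.

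\textbf{Main obstacle.} The delicate step is the a.s. sublinear growth $M(t)=o(t)$ for the jump integrals. This requires
\begin{equation*}
\langle M\rangle_t/t\ \text{ bounded, with } \int_0^\infty\frac{\mathrm{d}\langle M\rangle_s}{(1+s)^2}<\infty,
\end{equation*}
so I would explicitly verify the finiteness of $\int(\ln(1+\gamma_i))^2\,\mathrm{d}\nu$. For the tempered stable measure (\ref{eq:levy-measure}) with $\gamma(z)=\sigma z$, this holds: near $0$ the integrand behaves like $z^{1-\beta}$ with $\beta<2$, and the tail is exponentially damped. The remaining steps are deterministic calculus, with care taken that the random time $T_\varepsilon$ does not affect the linear-in-$t$ asymptotics.
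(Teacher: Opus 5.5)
The paper gives no proof of this lemma (it is imported from Liu, Wang and Wu), and your argument is the standard proof behind that citation and is correct: you obtain $M(t)/t\to0$ from the strong law for local martingales, integrate $e^{\lambda_0\Phi(t)}\Phi'(t)\le e^{(\lambda+\varepsilon)t}$ (resp.\ $\ge e^{(\lambda-\varepsilon)t}$) for your $\Phi(t)=\int_0^t\Psi(s)\,\mathrm{d}s$, and drop $-\lambda_0\Phi(t)\le0$ when $\lambda<0$. The square-integrability of $\ln(1+\gamma_i)$ that you add is missing from the statement as written, but some such condition is needed for $M(t)=o(t)$; it is exactly what the paper verifies for $\gamma(z)=\sigma z$ in the proof of Theorem~2.1, and your integration step only uses absolute continuity of $t\mapsto e^{\lambda_0\Phi(t)}$, so it also covers the c\`adl\`ag process $x(t)$ to which the paper actually applies the lemma.
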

\medskip

\begin{theorem}
For model (\ref{f01}), $\beta \in (0,2)$, define:
\[
\Phi := a - h - \frac{\tau^2}{2} + \int_0^{+\infty} \left[\ln(1 + \sigma z) - \sigma z \mathbf{1}_{\{0<z\leq 1\}} \right] \frac{e^{-\lambda z}}{z^{1+\beta}} \, \mathrm{d}z.
\]

Then the following conclusions hold:
\begin{enumerate}[
    label=(\roman*),
    leftmargin=*,
    align=left,
    itemindent=0pt,
    labelsep=0pt
]
    \item If $\Phi < 0$, then $\lim_{t \to \infty} x(t) = 0$ a.s., i.e., the population goes extinct.
    \item If $\Phi = 0$, then $\lim_{t \to \infty} \frac{1}{t} \int_0^t x(s)\, \mathrm{d}s = 0$ a.s., i.e., the population is non-persistent in time average a.s.
    \item If $\Phi > 0$, then $\lim_{t \to \infty} \frac{1}{t} \int_0^t x(s)\, \mathrm{d}s = \frac{\Phi}{b}$ a.s., i.e., the population is stable in time average a.s.
\end{enumerate}

\end{theorem}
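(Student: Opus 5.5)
Apply the It\^{o}--L\'evy formula to $\ln x(t)$, which is legitimate because Lemma 2.1 guarantees a global positive solution. Writing the jump part with the canonical small-jump compensation, we get
\begin{equation*}
\ln x(t)=\ln x(0)+\Phi t-b\int_0^t x(s)\,\mathrm{d}s+\tau B(t)+M_2(t),\qquad M_2(t):=\int_0^t\int_0^{+\infty}\ln(1+\sigma z)\,\widetilde N(\mathrm{d}s,\mathrm{d}z).
\end{equation*}
The drift is exactly $\Phi$. Indeed, the compensator contributes $\int_0^\infty[\ln(1+\sigma z)-\sigma z\mathbf{1}_{\{z\le1\}}]\nu(\mathrm{d}z)$, and $-\tau^2/2$ comes from the Brownian part. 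The integral defining $\Phi$ is finite by the same estimates used for $C_1$ and $C_2$ in the proof of Lemma 2.1: near $0$ the integrand is $O(z^2)\,z^{-1-\beta}$ with $\beta<2$, and at infinity there is exponential tempering. The identity has exactly the form required by Lemma 2.2, with $\lambda_0=b$, a single Brownian term, and a single jump term with $\gamma(v)=\sigma v$. The $\ln x(0)$ constant can be absorbed by perturbing $\lambda$ by $\pm\varepsilon$ for $t\ge T$ large.

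\emph{Case (i), $\Phi<0$.} Here $\ln x(t)\le \ln x(0)+\Phi t+\tau B(t)+M_2(t)$. Either apply Lemma 2.2$(\mathfrak{A})$ with $\lambda=\Phi+\varepsilon<0$, or argue directly: $B(t)/t\to0$ and $M_2(t)/t\to0$ a.s., so $\limsup_{t\to\infty} t^{-1}\ln x(t)\le\Phi<0$, which gives $x(t)\to0$ a.s.

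\emph{Cases (ii), (iii), $\Phi\ge0$.} For every $\varepsilon>0$, $\ln x(t)\le(\Phi+\varepsilon)t-b\int_0^t x\,\mathrm{d}s+\tau B(t)+M_2(t)$ for $t\ge T$. Lemma 2.2$(\mathfrak{A})$ then gives $\limsup_{t\to\infty} t^{-1}\int_0^t x\,\mathrm{d}s\le(\Phi+\varepsilon)/b$. Letting $\varepsilon\downarrow0$ gives the upper bound $\Phi/b$, and in case (ii) this already yields the limit $0$, since $x>0$. When $\Phi>0$, take $0<\varepsilon<\Phi$; then $\ln x(t)\ge(\Phi-\varepsilon)t-b\int_0^t x\,\mathrm{d}s+\tau B(t)+M_2(t)$ for large $t$, and Lemma 2.2$(\mathfrak{B})$ gives $\liminf_{t\to\infty} t^{-1}\int_0^t x\,\mathrm{d}s\ge(\Phi-\varepsilon)/b$. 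Letting $\varepsilon\downarrow0$ gives the matching lower bound, hence the limit $\Phi/b$.

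\emph{Main obstacle.} The delicate point is the martingale $M_2$, which has infinite activity. The strong law $M_2(t)/t\to0$, which Lemma 2.2 implicitly uses, needs a control of the quadratic variation. I would verify
\begin{equation*}
\int_0^\infty\ln^2(1+\sigma z)\,\nu(\mathrm{d}z)<\infty,
\end{equation*}
which holds because $\ln^2(1+\sigma z)\sim\sigma^2z^2$ near $0$ (integrable against $z^{-1-\beta}$ since $\beta<2$) and $e^{-\lambda z}$ controls the tail. Then $\langle M_2\rangle_t=ct$, and the strong law of large numbers for local martingales (e.g.\ Lipster's) gives $M_2(t)/t\to0$ a.s. The second point to check is the exact form of the compensator in the It\^{o}--L\'evy expansion of $\ln x$, because the $\mathbf 1_{\{z\le1\}}$ correction must match the definition of $\mathrm{d}L$. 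This can be read off the same computation of $\mathcal{L}$ performed for $V_1$ in the proof of Lemma 2.1, restricted to its $-\ln x$ part.
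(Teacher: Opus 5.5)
Your proposal is correct and matches the paper's proof essentially step for step. Both use the It\^{o}--L\'evy identity $\ln x(t)=\ln x(0)+\Phi t-b\int_0^t x(s)\,\mathrm{d}s+\tau B(t)+M_2(t)$, the strong law for $M_2$ via $\int_0^{+\infty}\ln^2(1+\sigma z)\,\nu(\mathrm{d}z)<\infty$, and the direct bound $\limsup_{t\to\infty}t^{-1}\ln x(t)\le\Phi$ for extinction; for $\Phi=0$ and $\Phi>0$ both apply Lemma 2.2$(\mathfrak{A})$ and $(\mathfrak{B})$ after absorbing $\ln x(0)$ into $\pm\varepsilon t$.
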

\medskip

\begin{proof}[$\mathbf{Proof\ of\ Theorem\ 2.1}$] \rm
Apply It\^{o}-L\'evy formula to $\ln x(t)$:
\begin{align*}
\mathrm{d}\ln x(t) 
&= \left(a - h +\sigma\int_1^{+\infty} z\,\nu(\mathrm{d}z)- b x(t) - \frac{\tau^2}{2}\right)\,\mathrm{d}t + \tau\,\mathrm{d}B(t) \\
&\quad + \int_0^{+\infty} \left[\ln(1 + \sigma z) - \sigma z \right] \nu(\mathrm{d}z) \mathrm{d}t \\
&\quad + \int_0^{+\infty} \ln(1 + \sigma z)\, \widetilde{N}(\mathrm{d}t,\, \mathrm{d}z) \\
&= \left(a - h - b x(t) - \frac{\tau^2}{2}\right)\,\mathrm{d}t + \tau\,\mathrm{d}B(t) \\
&\quad + \int_0^{+\infty} \left[\ln(1 + \sigma z) - \sigma z \mathbf{1}_{\{0<z\leq 1\}} \right] \frac{ e^{-\lambda z}}{z^{1+\beta}} \, \mathrm{d}z \mathrm{d}t \\
&\quad + \int_0^{+\infty} \ln(1 + \sigma z)\, \widetilde{N}(\mathrm{d}t,\, \mathrm{d}z) \\
&= (\Phi - b x(t))\,\mathrm{d}t + \tau\,\mathrm{d}B(t) + \int_0^{+\infty} \ln(1 + \sigma z)\, \widetilde{N}(\mathrm{d}t,\,\mathrm{d}z),
\end{align*}
Integrating yields:
\begin{equation}\label{f34}
\begin{array}{rcl}
\ds\ln x(t) = \ln x(0) + \Phi t - b \int_0^t x(s)\, \mathrm{d}s + \tau B(t) + \int_0^t \int_0^{+\infty} \ln(1 + \sigma z)\, \widetilde{N}(\mathrm{d}s,\,\mathrm{d}z).
\end{array}
\end{equation}
Divide both sides by $t$:
\begin{equation}\label{f35}
\begin{array}{rcl}
\ds\frac{\ln x(t)}{t} = \frac{\ln x(0)}{t} + \Phi - \frac{b}{t} \int_0^t x(s)\, \mathrm{d}s + \frac{\tau B(t)}{t} + \frac{M_2(t)}{t},
\end{array}
\end{equation}
where
\[
M_2(t) = \int_0^t \int_0^{+\infty} \ln(1 + \sigma z)\, \widetilde{N}(\mathrm{d}s,\,\mathrm{d}z).
\]
The Meyer's angle bracket process of $M_2(t)$ is
\[
\langle M_2, M_2 \rangle(t) = \int_0^t \int_0^{+\infty} (\ln(1 + \sigma z))^2 \frac{e^{-\lambda z}}{z^{1+\beta}} \, \mathrm{d}z \mathrm{d}s,
\]
for $\beta \in (0,2)$. By splitting the integral at $z=1$ and arguing similarly to the proofs of $C_1$ and $C_2$,
noting that $(\ln(1+\sigma z))^2$ behaves like $z^2$ as $z\to0$ and grows at most polynomially
as $z\to\infty$, we conclude that
\begin{equation*}
\begin{array}{rcl}
\ds\int_0^{+\infty} (\ln(1 + \sigma z))^2 \frac{e^{-\lambda z}}{z^{1+\beta}} \, \mathrm{d}z < +\infty.
\end{array}
\end{equation*}
We have
\begin{equation}\label{f36}
\begin{array}{rcl}
\ds\int_0^t \frac{\mathrm{d}\langle M_2, M_2\rangle(s)}{(1+s)^2} = \frac{t}{1+t} \int_0^{+\infty} (\ln(1 + \sigma z))^2 \frac{e^{-\lambda z}}{z^{1+\beta}} \,\mathrm{d}z< +\infty.
\end{array}
\end{equation}
Combining (\ref{f36}) with the strong law of large numbers for local martingales \citep{a31}, we therefore have
\begin{equation*}
\begin{array}{rcl}
\ds\lim_{t \to \infty} \frac{M_2(t)}{t} = 0 \quad \text{a.s.}
\end{array}
\end{equation*}
Additionally, it is well known that
\begin{equation*}
\begin{array}{rcl}
\ds\lim_{t \to \infty} \frac{B(t)}{t} = 0
\end{array}
\end{equation*}

\noindent $(\romannumeral 1)$ Extinction $(\Phi < 0)$: From (\ref{f35}),
\[
\ds\frac{\ln x(t)}{t} \leq \frac{\ln x(0)}{t} + \Phi + \frac{\tau B(t)}{t} + \frac{M_2(t)}{t}.
\]
Letting $t \to \infty$,
\[
\limsup_{t \to \infty} \frac{\ln x(t)}{t} \leq \Phi.
\]
Due to $\Phi < 0$, we obtain
\[
\lim_{t \to \infty} x(t) = 0, \quad \text{a.s.}
\]

\noindent $(\romannumeral 2)$ Non-persistence $(\Phi = 0)$ and $(\romannumeral 3)$ Persistence ($\Phi > 0$): Since
\[
\lim_{t \to \infty} \frac{\ln x(0)}{t} = 0,
\]
thus for arbitrary $\varepsilon > 0$, there exists a random time $T_1 > 0$ such that for $t \geq T_1$,
\[
-\varepsilon \leq \frac{\ln x(0)}{t} \leq \varepsilon.
\]
From \eqref{f34},
\begin{equation}\label{f39}
\begin{array}{rcl}
\ds\ln x(t) &\leq \ds(\Phi + \varepsilon)t - b \int_0^t x(s)\, \mathrm{d}s + \tau B(t) + \int_0^t \int_0^{+\infty} \ln(1 + \sigma z)\, \widetilde{N}(\mathrm{d}s, \,\mathrm{d}z), \end{array}
\end{equation}
\begin{equation}\label{f40}
\begin{array}{rcl}
\ds\ln x(t) &\geq \ds(\Phi - \varepsilon)t - b \int_0^t x(s)\, \mathrm{d}s + \tau B(t) + \int_0^t \int_0^{+\infty} \ln(1 + \sigma z)\, \widetilde{N}(\mathrm{d}s, \,\mathrm{d}z). 
\end{array}
\end{equation}
If $\Phi = 0$, Applying $\rm(\mathfrak{A})$ in Lemma 2.2 to \eqref{f39}, we can deduce that
\begin{equation*}
\ds\lim_{t \to \infty} \frac{1}{t} \int_0^t x(s)\, \mathrm{d}s\leq\limsup_{t\rightarrow+\infty}\frac{1}{t}\int_0^tx(s)\,\mathrm{d}s\leq \frac{\varepsilon}{b},\quad \text{a.s.}.
\end{equation*}
From the arbitrariness of $\varepsilon$, then (ii) holds.

\noindent If $\Phi > 0$, let $\varepsilon$ be sufficiently small such that $\Phi - \varepsilon>0$. Applying $\rm(\mathfrak{A})$ and $\rm(\mathfrak{B})$ in Lemma 2.2 to \eqref{f39} and \eqref{f40}, we observe that
\[
\ds\frac{(\Phi - \varepsilon)}{b} \leq \liminf_{t \to \infty} \frac{1}{t} \int_0^t x(s)\, \mathrm{d}s \leq \limsup_{t \to \infty} \frac{1}{t} \int_0^t x(s)\, \mathrm{d}s \leq \frac{(\Phi + \varepsilon)}{b}.
\]
From the arbitrariness of $\varepsilon$, then (iii) holds.
\end{proof}

\rm
\section{Stationary in Distribution}
\label{3}

In this section, we will give some sufficient conditions which guarantee that the model (\ref{f01}) has a unique stationary distribution.
\medskip

\begin{lemma}
If \( \beta \in (0,2) \), Then for any $p > 0$, there exists a positive $K_p$ such that $x(t)$ of the model (\ref{f01}) satisfies
\begin{equation*}
\limsup_{t\to\infty} \mathbb{E}\left[(x(t))^{p}\right] \leq K_p.
\end{equation*}
\end{lemma}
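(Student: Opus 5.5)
We plan the standard exponential-weight Lyapunov argument applied to $V(x)=x^p$. Fix $p>0$ and apply the It\^{o}-L\'evy formula to $e^{t}x^p(t)$, localized by the stopping times $\tau_k$ from the proof of Lemma 2.1. This gives
\begin{equation*}
\mathbb{E}\bigl[e^{t\wedge\tau_k}x^p(t\wedge\tau_k)\bigr]=x^p(0)+\mathbb{E}\int_0^{t\wedge\tau_k}e^{s}\bigl[x^p(s)+\mathcal{L}x^p(s)\bigr]\,\mathrm{d}s .
\end{equation*}
The martingale parts have zero mean because, up to $\tau_k$, the process is confined to $(1/k,k)$. For the jump martingale we additionally need $\int_0^\infty[(1+\sigma z)^p-1]^2\nu(\mathrm{d}z)<\infty$, and this holds for every $p$ thanks to the factor $e^{-\lambda z}$.

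The generator is
\begin{equation*}
\mathcal{L}x^p=px^p\Bigl(a-h-bx+\tfrac{1}{2}(p-1)\tau^2\Bigr)+x^p\,J_p,
\end{equation*}
where
\begin{equation*}
J_p:=\int_0^{+\infty}\Bigl[(1+\sigma z)^p-1-p\sigma z\mathbf{1}_{\{0<z\leq1\}}\Bigr]\frac{e^{-\lambda z}}{z^{1+\beta}}\,\mathrm{d}z .
\end{equation*}
The main technical step is to show $J_p<\infty$ for $\beta\in(0,2)$. We do this exactly as for $C_1,\dots,C_4$ in Lemma 2.1, splitting at $z=1$:
\begin{itemize}
\item On $(0,1]$, Taylor's theorem gives $|(1+\sigma z)^p-1-p\sigma z|\le C_p\sigma^2z^2$, and $\int_0^1 z^{1-\beta}\,\mathrm{d}z=1/(2-\beta)<\infty$.
\item On $(1,\infty)$, $(1+\sigma z)^p$ grows only polynomially while $e^{-\lambda z}$ decays exponentially, so the limit comparison test (as used for $C_2$) gives convergence. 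This step needs $\lambda>0$, which we assume throughout.
\end{itemize}

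Hence
\begin{equation*}
x^p+\mathcal{L}x^p\le \bigl(1+p|a-h|+\tfrac{1}{2}p|p-1|\tau^2+|J_p|\bigr)x^p-pb\,x^{p+1}.
\end{equation*}
The right-hand side is a polynomial in $x$ whose leading coefficient $-pb$ is negative, so it is bounded above on $\mathbb{R}_+$ by some constant $K_p>0$. Substituting this bound gives
\begin{equation*}
\mathbb{E}\bigl[e^{t\wedge\tau_k}x^p(t\wedge\tau_k)\bigr]\le x^p(0)+K_p\bigl(e^{t}-1\bigr).
\end{equation*}
We let $k\to\infty$, which is allowed since $\tau_k\to\infty$ a.s. by Lemma 2.1, and apply Fatou's lemma to get $\mathbb{E}[x^p(t)]\le e^{-t}x^p(0)+K_p$. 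Taking $\limsup_{t\to\infty}$ yields the claim.

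The only real obstacle is the finiteness of $J_p$ and of the compensator integral. The small-jump singularity is controlled by $\beta<2$, and the large-jump growth of $(1+\sigma z)^p$ is controlled by the tempering. Everything else is routine.
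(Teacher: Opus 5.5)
Your proposal is correct and follows the paper's proof essentially step for step. It uses the same Lyapunov function $x^p$, the same split of the jump integral at $z=1$ (a Taylor bound with $\beta<2$ on $(0,1]$ and exponential tempering on $(1,\infty)$), the same bound $x^p+\mathcal{L}x^p\le K_p$, and the same $e^{t}$-weighted It\^{o} formula with localization and Fatou's lemma. Your extra remarks make explicit two points the paper uses only tacitly: that the compensated jump integral has zero mean up to the stopping time because $\int_0^{\infty}[(1+\sigma z)^p-1]^2\,\nu(\mathrm{d}z)<\infty$, and that the argument needs $\lambda>0$.
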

\medskip

\begin{proof}[$\mathbf{Proof\ of\ Lemma\ 3.1}$]
Define $V_2(x) := x^{p}$ for $p > 0$. Applying the It\^{o}-L\'evy formula to $V(x)$ yields
\[
\mathrm{d}V_2(x(t)) = \mathcal{L}V_2(x(t))\,\mathrm{d}t + p x^{p}(t)\tau \,\mathrm{d}B(t) + \int_{0}^{+\infty} \left[(x(t^-) + \sigma x(t^-)z)^{p} - (x(t^-))^{p}\right] \widetilde{N}(\mathrm{d}t,\,\mathrm{d}z),
\]
where the generator $\mathcal{L}V(x)$ is given by

\begin{equation*}
\begin{array}{rcl}
\mathcal{L}V_2(x)& =&\ds p x^{p}\left[a-h+\sigma\int_1^{+\infty} z\,\nu(\mathrm{d}z)-bx + \frac{p-1}{2}\tau^{2}\right] \\
&\quad& \ds+ x^{p} \int_{0}^{+\infty} \left[(1+\sigma z)^{p} - 1 - p\sigma z \right]\,\nu(\mathrm{d}z),\\
& =&\ds p x^{p}\left[a-h-bx + \frac{p-1}{2}\tau^{2}\right] + x^{p} J(\sigma,p),\\
\end{array}
\end{equation*}
with
\[
J(\sigma,p) = \int_{0}^{+\infty} \left[(1+\sigma z)^{p} - 1 - p\sigma z\mathbf{1}_{\{0<z\leq 1\}} \right]\,\nu(\mathrm{d}z).
\]
We now estimate $J(\sigma,p)$. Split the integral into two parts:
\[
J(\sigma,p) =  \underbrace{\int_{0}^{1} \left[(1+\sigma z)^{p} - 1 - p\sigma z\right]\, \nu(\mathrm{d}z)}_{\ds J_1} + \underbrace{\int_{1}^{+\infty} \left[(1+\sigma z)^{p} - 1\right]\, \nu(\mathrm{d}z)}_{\ds J_2}.
\]
\noindent For $J_1$: Let $F(z) := (1+\sigma z)^{p} - 1 - p\sigma z$. By Taylor's theorem, there exists $\theta \in (0,1)$ such that
\[
|F(z)| = \frac{|p(p-1)|}{2}(1+\theta\sigma z)^{p-2}(\sigma z)^{2},\quad\sigma>0,\quad 0<z<1 .
\]
Note that for $z\in(0,1]$ we have
\[
1 \le 1+\theta\sigma z \le 1+\sigma .
\]
If $0<p<2$, then $p-2<0$ and the function $(1+\theta\sigma z)^{p-2}$ is monotone decreasing. Since $1+\theta\sigma z \ge 1$, it follows that
\[
(1+\theta\sigma z)^{p-2} \le 1 .
\]
If $p\ge 2$, then $p-2\ge 0$ and the function $(1+\theta\sigma z)^{p-2}$ is monotone increasing. Hence,
\[
(1+\theta\sigma z)^{p-2} \le (1+\sigma)^{p-2}.
\]
Define
\[
K(\sigma,p):=\max\{1,(1+\sigma)^{p-2}\},
\]
then for all $p>0$ and all $z\in(0,1]$,
\[
|F(z)|
\le \frac{|p(p-1)|}{2}\,K(\sigma,p)\,\sigma^2 z^2.
\]
Therefore,
\[
J_1\le\int_0^1 |F(z)|\,\nu(\mathrm{d}z)
\le \frac{|p(p-1)|}{2}\,K(\sigma,p)\,\sigma^2
\int_0^1 z^2\,\nu(\mathrm{d}z).
\]
Substituting the expression of $\nu(dz)$ and $\beta \in (0,2)$, we obtain
\[
\int_0^1 z^2\,\nu(dz)
= \int_0^1 z^2 \frac{e^{-\lambda z}}{z^{1+\beta}}\,\mathrm{d}z
= \int_0^1 z^{1-\beta} e^{-\lambda z}\,\mathrm{d}z
\le \int_0^1 z^{1-\beta}\,\mathrm{d}z
= \frac{1}{2-\beta}.
\]
Combining the above estimates yields
\begin{equation}\label{bc1}
J_1\le \frac{|p(p-1)|}{2}\,K(\sigma,p)\,\sigma^2\cdot\frac{1}{2-\beta}:=K^{'}.
\end{equation}
\noindent For $J_2$: If $0<p\le 1$, it is obvious that
\[
(1+\sigma z)^p-1 \le \sigma^p z^p, \quad\sigma>0,\quad z>1  .
\]
If $p\ge 1$, we use the classical inequality
\((a+b)^p \le 2^{p-1}(a^p+b^p)\),
\(a,b\ge 0.\)
Taking $a=1$ and $b=\sigma z$, it follows that
\[
(1+\sigma z)^p \le 2^{p-1}(1+\sigma^p z^p),
\]
and thus
\[
(1+\sigma z)^p-1 \le 2^{p-1}\sigma^p z^p + \bigl(2^{p-1}-1\bigr).
\]
Combining the above two cases, there exist constants depending only on $p$,
\[
C_p := \max\{1,\,2^{p-1}\},
\qquad
D_p := \max\{0,\,2^{p-1}-1\},
\]
such that,
\[
(1+\sigma z)^p - 1 \le C_p \sigma^p z^p + D_p .
\]
Therefore,
\begin{equation*}
\begin{array}{rcl}
J_2&=&\ds \int_1^\infty \bigl[(1+\sigma z)^p - 1\bigr]\,\nu(dz)\\
&\le&\ds \int_1^\infty \bigl(C_p \sigma^p z^p + D_p\bigr)\frac{e^{-\lambda z}}{z^{1+\beta}}\,dz \\
&\le& \ds C_p \sigma^p
\int_1^\infty z^{p-1-\beta} e^{-\lambda z}\,dz
+
D_p
\int_1^\infty z^{-1-\beta} e^{-\lambda z}\,dz .
\end{array}
\end{equation*}
Since $e^{-\lambda z}$ exhibits exponential decay, both integrals on the right-hand side converge for any real exponent, and thus the right-hand side is finite. Therefore, $J_2$ is bounded. That is, there exists a constant $K^{''}>0$ such that
\begin{equation}\label{bc2}
J_2
\le K^{''}.
\end{equation}
From (\ref{bc1}) and (\ref{bc2}), we have
\[
J(\sigma,p)=J_1+J_2\leq K^{'}+K^{''}:=\widehat{K}.
\]
Substituting this bound, we get
\[
\mathcal{L}V_2(x) \leq p x^{p}\left[a-h-bx + \frac{p-1}{2}\tau^{2}\right] + x^{p} \widehat{K}.
\]
Hence,
\begin{equation*}
\begin{array}{rcl}
V_2(x) + \mathcal{L}V_2(x) &\leq&\ds x^{p}\left[1 + \widehat{K} + p\left(a-h + \frac{p-1}{2}\tau^{2}\right) - p b x\right]\\
&\leq&\ds x^{p}\left[1 + \widehat{K} + p\left(a-h + \frac{p-1}{2}\tau^{2}\right)\right]- p b x^{p+1}.
\end{array}
\end{equation*}
Then there exists a finite maximum $K_p > 0$ such that
\[
V_2(x) + \mathcal{L}V_2(x) \leq K_p, \quad \forall x \in \mathbb{R}_+.
\]
Define the stopping time $\sigma_k := \inf\{t \geq 0 : x(t) > k\}$. By the global existence of the solution, $\sigma_k \to \infty$ almost surely as $k \to \infty$. Applying It$\rm\hat{o}$' formula to $Y(t) = e^{t} V_2(x(t))$ and taking expectation gives
\[
\mathbb{E}\left[e^{t \wedge \sigma_k} V_2(x(t \wedge \sigma_k))\right] = V_2(x(0)) + \mathbb{E}\left[\int_{0}^{t \wedge \sigma_k} e^{s} \left[V_2(x(s)) + \mathcal{L}V_2(x(s))\right]\, \mathrm{d}s\right]
\]
\[
\leq V_2(x(0)) + K_p \mathbb{E}\left[\int_{0}^{t \wedge \sigma_k} e^{s}\, \mathrm{d}s\right] = V_2(x(0)) + K_p (e^{t \wedge \sigma_k} - 1).
\]
Letting $k \to \infty$ and applying Fatou's lemma, we obtain
\[
\mathbb{E}\left[e^{t} V_2(x(t))\right] \leq V_2(x(0)) + K_p (e^{t} - 1),
\]
which implies
\[
\mathbb{E}[x^{p}(t)] \leq e^{-t} V_2(x(0)) + K_p (1 - e^{-t}).
\]
Therefore,
\[
\limsup_{t \to \infty} \mathbb{E}[x^{p}(t)] \leq K_p.
\]
\end{proof}
\medskip

\begin{theorem}
Model (\ref{f01}) is  stable in distribution, i.e., there is a unique probability measure $\pi(\cdot)$ on $[0,+\infty)$ such that, for every initial data $x(0)\in \mathbb{R}_+$, the transition probability $p(t,x(0),\cdot)$ of $x(t)$ converges weakly to $\pi(\cdot)$ as $t\rightarrow+\infty$.
\end{theorem}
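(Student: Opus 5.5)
The plan is to follow the standard two-ingredient scheme for stability in distribution. The two ingredients are \emph{tightness} of the family of transition probabilities and \emph{asymptotic attractivity} of solutions started from different initial data. From these I would conclude, by the usual bounded-Lipschitz metric argument, that $p(t,x(0),\cdot)$ is Cauchy and converges to a limit $\pi$ that does not depend on $x(0)$. Concretely, let $\mathcal{P}$ be the probability measures on $[0,+\infty)$ and let
\[
d_{BL}(P_1,P_2)=\sup_{f}\Bigl|\int f\,\mathrm{d}P_1-\int f\,\mathrm{d}P_2\Bigr|,
\]
the supremum being over $f$ with $|f|\le 1$ and $|f(u)-f(v)|\le |u-v|$. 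This metric induces weak convergence.

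\textbf{Step 1 (tightness).} Lemma 3.1 with $p=1$ (or $p=2$) gives $\sup_{t\ge0}\mathbb{E}x(t)\le \max\{x(0),K_1\}$. Chebyshev's inequality then shows that $\{p(t,x(0),\cdot):t\ge0\}$ is tight, uniformly for $x(0)$ in compact subsets of $\mathbb{R}_+$.

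\textbf{Step 2 (attractivity).} Let $x(t)$ and $y(t)$ be the solutions with initial values $x(0)$ and $y(0)$, driven by the same $B$ and $N$. The key structural observation is that the noise is multiplicative, so both processes jump by the same factor $1+\sigma z$. Hence, by the It\^o--L\'evy computation already carried out in the proof of Theorem 2.1, all noise terms cancel and
\[
\mathrm{d}\bigl(\ln x(t)-\ln y(t)\bigr)=-b\bigl(x(t)-y(t)\bigr)\,\mathrm{d}t .
\]
Since $\ln x-\ln y$ and $x-y$ have the same sign, $V(t)=|\ln x(t)-\ln y(t)|$ is a.s.\ absolutely continuous and nonincreasing, with $\mathrm{d}V(t)=-b|x(t)-y(t)|\,\mathrm{d}t$. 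Integrating gives
\[
\int_0^{\infty}|x(s)-y(s)|\,\mathrm{d}s\le \frac{|\ln x(0)-\ln y(0)|}{b}\quad\text{a.s.},
\]
and so $\int_0^\infty\mathbb{E}|x(s)-y(s)|\,\mathrm{d}s<\infty$.

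To upgrade this to $\mathbb{E}|x(t)-y(t)|\to0$, I would show that $g(t):=\mathbb{E}|x(t)-y(t)|$ is uniformly continuous and then apply a Barbalat-type argument. For the continuity, write $x(t)-x(s)$ from (\ref{f01}) as a drift integral, a Brownian integral and a compensated jump integral plus the drift $\sigma\int_1^\infty z\nu(\mathrm{d}z)$. The drift part is bounded using Lemma 3.1 with $p=1,2$. The stochastic parts are bounded by the It\^o isometry, using $\int_0^\infty z^2\nu(\mathrm{d}z)<\infty$ (finite because $\beta<2$ and $\lambda>0$) together with the uniform bound on $\mathbb{E}x^2$. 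This yields
\[
\mathbb{E}|x(t)-x(s)|\le C\bigl(|t-s|+|t-s|^{1/2}\bigr),
\]
with $C$ independent of $t$ and $s$, and the same bound for $y$. Hence $g$ is uniformly continuous and integrable on $[0,\infty)$, so $g(t)\to0$.

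\textbf{Step 3 (conclusion).} For each such $f$ we have $|\mathbb{E}f(x(t))-\mathbb{E}f(y(t))|\le \mathbb{E}|x(t)-y(t)|\wedge 2$. Therefore $d_{BL}\bigl(p(t,x(0),\cdot),p(t,y(0),\cdot)\bigr)\to0$, uniformly for initial data in compact subsets of $\mathbb{R}_+$. By the Markov property,
\[
p(t+s,x(0),\cdot)=\int p(t,u,\cdot)\,p(s,x(0),\mathrm{d}u).
\]
Combining this with the tightness of Step 1, so that the mass of $p(s,x(0),\cdot)$ outside a suitable compact subinterval of $\mathbb{R}_+$ is uniformly small, gives
\[
\sup_{s\ge0}d_{BL}\bigl(p(t+s,x(0),\cdot),p(t,x(0),\cdot)\bigr)\to0 .
\]
Thus $p(t,x(0),\cdot)$ is Cauchy in the complete space $(\mathcal{P},d_{BL})$ and has a limit $\pi$. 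Attractivity shows that $\pi$ is the same for every $x(0)\in\mathbb{R}_+$, which gives uniqueness. Note that $\pi$ is supported on $[0,\infty)$; it equals $\delta_0$ when $\Phi\le0$, consistent with Theorem 2.1.

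\textbf{Main obstacle.} The step I expect to need the most care is the passage in Step 2 from integrability along paths to $\mathbb{E}|x(t)-y(t)|\to0$. The difficulty is the uniform-in-time modulus of continuity in the presence of infinitely many small jumps. It rests on the finiteness of $\int z^2\nu(\mathrm{d}z)$ and on the uniform second-moment bound from Lemma 3.1. The tightness step near $0$, where $\ln$ is singular, is harmless because $d_{BL}$ is insensitive to it.
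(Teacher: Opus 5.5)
Your overall scheme (contraction of $|\ln x-\ln y|$, integrability plus uniform continuity and Barbalat, then a Markov-property Cauchy argument in $d_{BL}$) is the paper's, and Steps 1--2 are fine. The genuine gap is in Step 3, at the boundary $x=0$.

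\textbf{The gap.} Step 1 (Chebyshev with Lemma 3.1) only controls the mass of $p(s,x(0),\cdot)$ near $+\infty$, i.e.\ outside $[0,K]$. It says nothing about the mass in $(0,\delta)$.

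Your attractivity is uniform at best for initial data in compact sets $[\delta,K]\subset(0,\infty)$. Even that needs a line you omitted. Integrability plus a common modulus of continuity does not give uniform decay over a family (think of moving bumps). The pathwise bound $|\ln x(t;u)-\ln x(t;u')|\le|\ln u-\ln u'|$ does help: it gives $\mathbb{E}|x(t;u)-x(t;u')|\le(u'/u-1)\sup_r\mathbb{E}x(r;u)$ for $u\le u'$, and a finite net then finishes.

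Your remark that the region near $0$ is harmless because $d_{BL}$ is insensitive to it is false, in both regimes:
\begin{itemize}
\item If $\Phi>0$, (\ref{f34}) gives $x(t;u)\le u\,e^{\Phi t+\tau B(t)+M_2(t)}$. Testing with $f(v)=v\wedge1$ yields $\sup_{u\in(0,\delta)}d_{BL}\bigl(p(t,u,\cdot),p(t,x(0),\cdot)\bigr)\ge\mathbb{E}[x(t;x(0))\wedge1]$. This does not tend to $0$, since persistence (Theorem 2.1(iii) with Lemma 3.1) keeps it positive on average. A path started at tiny $u$ needs time of order $\ln(1/u)/\Phi$ to leave the vicinity of $0$.
\item If $\Phi\le0$, the mass $p(s,x(0),(0,\delta))$ tends to $1$, so it cannot be made uniformly small.
\end{itemize}
Either way, $\sup_s d_{BL}\bigl(p(t+s,x(0),\cdot),p(t,x(0),\cdot)\bigr)\to0$ is not established. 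The paper's write-up has the same weakness: it applies (\ref{e1}) uniformly over $\widetilde{x}(0)\in\bar G_K=(0,K]$, which is exactly what fails.

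\textbf{How to close it.} Split by the sign of $\Phi$.

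For $\Phi>0$, add an inverse-moment bound. One computes
\[
\mathcal{L}x^{-\theta}=\theta x^{-\theta}\Bigl[bx-(a-h)+\tfrac{1+\theta}{2}\tau^{2}+\theta^{-1}\int_0^{+\infty}\bigl((1+\sigma z)^{-\theta}-1+\theta\sigma z\mathbf{1}_{\{0<z\le1\}}\bigr)\nu(\mathrm{d}z)\Bigr].
\]
By dominated convergence, the bracket minus $bx$ tends to $-\Phi$ as $\theta\downarrow0$. So for small $\theta$ we get $\mathcal{L}x^{-\theta}\le-\frac{\theta\Phi}{2}x^{-\theta}+b\theta x^{1-\theta}$. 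With Lemma 3.1 this gives $\sup_t\mathbb{E}x(t)^{-\theta}<\infty$, hence tightness in $(0,\infty)$, and your Step 3 then works on $[\delta,K]$.

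For $\Phi\le0$, show $p(t,x(0),\cdot)\to\delta_0$ directly. When $\Phi<0$, use a.s.\ extinction plus uniform integrability from Lemma 3.1. When $\Phi=0$, Theorem 2.1(ii) controls only time averages, so something more is needed.

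A clean route that handles every case at once: integrating (\ref{f34}) gives
\[
\frac{1}{x(t)}=\frac{e^{-\Phi t-L(t)}}{x(0)}+b\int_0^t e^{-\Phi(t-r)-(L(t)-L(r))}\,\mathrm{d}r,
\]
with the L\'evy process $L=\tau B+M_2$. Time reversal of $L$ shows this has the law of
\[
\frac{e^{-\Phi t-L(t)}}{x(0)}+b\int_0^t e^{-\Phi u-L(u)}\,\mathrm{d}u .
\]
This expression converges a.s., to $+\infty$ exactly when $\Phi\le0$. That yields $\pi$ directly, independent of $x(0)$.
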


\begin{proof}[$\mathbf{Proof\ of\ Theorem\ 3.1}$] \rm Let $x(t)$ and $\widetilde{x}(t)$ be two solutions of model (\ref{f01}) with initial data $x(0)\in \mathbb{R}_+$ and $\widetilde{x}(0)\in \mathbb{R}_+$, respectively. Define
\begin{equation*}
\begin{array}{rcl}
\ds V(t)&:=&\ds\big|\ln x(t)-\ln \widetilde{x}(t)\big|.\\
\end{array}
\end{equation*}
Applying Itô-Lévy formula, we obtain
\[
\mathrm{d}^+V(t)=\sgn\big(x(t)-\widetilde{x}(t)\big)\big[\mathrm{d}\ln x(t)-\mathrm{d}\ln \widetilde{x}(t)]\big|,
\]
where $\mathrm{d}^+$ means right derivative.
Since the diffusion terms and jump terms cancel completely, we have
\[
\mathrm{d}\ln x(t)-\mathrm{d}\ln \widetilde{x}(t)
=
-b(x(t)-\widetilde{x}(t))\,\mathrm{d}t.
\]
Hence
\[
\mathrm{d}^+V(t)
=\ds-b|x(t)-\widetilde{x}(t)|\,\mathrm{d}t.
\]
Consequently,
\begin{equation*}
\ds0\leqslant\mathbb{E}(V(t))=\ds V(0)-b\int_{0}^{t}\mathbb{E}\big|x(s)-\widetilde{x}(s)\big|\,\mathrm{d}s
\end{equation*}
which yields
\begin{equation*}
\ds\sup_{t \geq 0}\int_{0}^{t}\mathbb{E}\big|x(s)-\widetilde{x}(s)\big|\,\mathrm{d}s\leqslant \frac{V(0)}{b}<+\infty,\\
\end{equation*}
Hence
\begin{equation}\label{w}
\mathbb{E}\big|x(t)-\widetilde{x}(t)\big|\in L^1[0,+\infty).
\end{equation}
We now show that $\mathbb{E}|x(t) - \tilde{x}(t)|$ is uniformly continuous.
For $0 \le s < t$, the It$\rm\hat{o}$'-L\'evy dynamics of $x(t)$ gives
\begin{equation*}
\begin{array}{rcl}
x(t) - x(s) &=&\ds \int_s^t x(r)\big[a-h+\sigma \int_{1}^{+\infty} z \, \nu(\mathrm{d}z)-bx(r)\big]\mathrm{d}r
+ \tau\int_s^t x(r)\mathrm{d}B_r\\
&\quad&\ds+ \sigma\int_s^t\int_0^{+\infty} x(r^{-})z\,\widetilde{N}(\mathrm{d}r,\mathrm{d}z).
\end{array}
\end{equation*}
By the inequality \((a+b+c)^2\le 3(a^2+b^2+c^2)\) and It$\rm\hat{o}$ isometries, we obtain
\[
\mathbb{E}|x(t)-x(s)|^2 \le 3\big(\mathbb{E}|I_1|^2+\mathbb{E}|I_2|^2+\mathbb{E}|I_3|^2\big)
\]
with
\begin{equation*}
\begin{array}{rcl}
I_1&=&\ds\int_s^t x(r)[a-h+\sigma \int_{1}^{+\infty} z \, \nu(\mathrm{d}z)-bx(r)]\mathrm{d}r,\\
I_2&=&\ds\tau\int_s^t x(r)\mathrm{d}B_r,\\
I_3&=&\ds\sigma\int_s^t\int_0^{+\infty} x(r^{-})z\,\widetilde{N}(\mathrm{d}r,\mathrm{d}z).
\end{array}
\end{equation*}
Next, we estimate the three terms:

\noindent Drift term \(I_1\): It follows from Lemma~3.1 that for any $p \ge 1$, there exists a constant $K_p > 0$ such that
\[
\sup_{r \ge 0} \mathbb{E}\bigl|x(r)\bigr|^p \le K_p.
\]
Note that
\[
\int_{1}^{+\infty} z\,\nu(dz)
=
\int_{1}^{+\infty}\frac{e^{-\lambda z}}{z^{\beta}}\,dz
<\infty,
\]
and hence
\[
a-h+\sigma\int_{1}^{+\infty} z\,\nu(dz)
\]
is a finite constant.
Therefore, there exists a constant \(\widehat{M}\) such that
\[
\mathbb{E}\big|x(r)[a-h+\sigma \int_{1}^{+\infty} z \, \nu(\mathrm{d}z)-bx(r)]\big|^2 \le \widehat{M},
\]
and consequently
\begin{equation*}
\begin{array}{rcl}
\mathbb{E}|I_1|^2&=&\ds\mathbb{E}\Big|\int_s^t x(r)[a-h+\sigma \int_{1}^{+\infty} z \, \nu(\mathrm{d}z)-bx(r)]\mathrm{d}r\Big|^2\\
&\le&\ds(t-s)\mathbb{E}\int_s^t \Big|x(r)[a-h+\sigma \int_{1}^{+\infty} z \, \nu(\mathrm{d}z)-bx(r)]\Big|^2\mathrm{d}r\\
&=&\ds(t-s)\int_s^t \mathbb{E}\Big|x(r)[a-h+\sigma \int_{1}^{+\infty} z \, \nu(\mathrm{d}z)-bx(r)]\Big|^2\mathrm{d}r\\
&\le&\ds(t-s)\int_s^t \widehat{M}\mathrm{d}r \\
&=&\ds \widehat{M}(t-s)^2.
\end{array}
\end{equation*}
Diffusion term \(I_2\):
By the It$\rm\hat{o}$ isometry,
\[
\mathbb{E}|I_2|^2 = \tau^2\int_s^t\mathbb{E}[x(r)^2]\mathrm{d}r \le \tau^2K_2(t-s).
\]
Jump term \(I_3\):
For the compensated integral, the It$\rm\hat{o}$ isometry gives
\[
\mathbb{E}|I_3|^2 = \sigma^2\int_s^t\mathbb{E}[x(r)^2]\mathrm{d}r\int_0^{+\infty} z^2\nu(\mathrm{d}z)
\le \sigma^2K_2(t-s)\int_0^{+\infty} z^{1-\beta}e^{-\lambda z}\mathrm{d}z.
\]
Since \(\beta\in(0,2)\), the integral \(\int_0^{+\infty} z^{1-\beta}e^{-\lambda z}\mathrm{d}z\) is finite.
Denote this value by \(J_\nu\). Then
\[
\mathbb{E}|I_3|^2 \le \sigma^2K_2 J_\nu\,(t-s).
\]
Let
\[
L_1 = 3\big(\tau^2K_2 + \sigma^2K_2 J_\nu\big), \qquad
L_2 = 3\widehat{M}.
\]
Then
\[
\mathbb{E}|x(t)-x(s)|^2 \le L_1(t-s) + L_2(t-s)^2.
\]
By the Cauchy-Schwarz inequality,
\[
\mathbb{E}|x(t)-x(s)| \le \sqrt{L_1(t-s) + L_2(t-s)^2}.
\]
If $t-s>1$, then $(t-s)^2>(t-s)$, we can deduce that
\[
\mathbb{E}|x(t)-x(s)| \le \sqrt{(L_1+ L_2)}(t-s).
\]
If $t-s\le1$, then $(t-s)^2\le(t-s)$, we can deduce that
\[
\mathbb{E}|x(t)-x(s)| \le \sqrt{(L_1+ L_2)}\sqrt{(t-s)}.
\]
Thus, there exists a constant \(L>0\) such that
\[
\mathbb{E}|x(t)-x(s)| \le L\big(|t-s| + \sqrt{|t-s|}\big).
\]
The same estimate holds for \(\widetilde{x}(t)\) by an identical argument. Consequently,
\begin{equation*}
\begin{array}{rcl}
\ds|\mathbb{E}|x(t) - \widetilde{x}(t)|-\mathbb{E}|x(s) - \widetilde{x}(s)|| &\le&\ds\mathbb{E}||x(t) - \widetilde{x}(t)|-|x(s) - \widetilde{x}(s)|| \\
&\le&\ds\mathbb{E}|x(t) - x(s)| + \mathbb{E}|\widetilde{x}(t) - \widetilde{x}(s)| \\
&\le&\ds 2L\big(|t-s| + \sqrt{|t-s|}\big),
\end{array}
\end{equation*}
which implies that $\mathbb{E}|x(t) - \widetilde{x}(t)|$ is uniformly continuous on $[0,+\infty)$. Combined with (\ref{w}) and applying Barbalat's lemma in \citep{a4}, we obtain
\begin{equation}\label{e1}
\lim_{t\rightarrow+\infty}\mathbb{E}\big|x(t)-\widetilde{x}(t)\big|=0.
\end{equation}
Let $P(t, x(0), Q)$ denotes the probability of $x(t)\in Q$. Lemma~3.1 and Chebyshev's inequality imply that the family $\{p(t,x(0),\cdot):t\geq 0\}$ is tight. Denote by $\mathscr{P}(\mathbb{R}_+)$ the set of all probability measures on $\mathbb{R}_+$.
For $P_1, P_2\in\mathscr{P}$ define the metric
\begin{equation*}
\mathrm{d}_U(P_{1}, P_{2}):=\sup_{f\in U}\big|\int_0^{+\infty}f(x)P_1(\mathrm{d}x)-\int_0^{+\infty}f(x)P_2(\mathrm{d}x)\big|,
\end{equation*}
where
\begin{equation*}
U=\big\{f:\mathbb{R}_+\rightarrow \mathbb{R}\big| |f(x)-f(y)|\leq \parallel x-y\parallel, |f(\cdot)|\leq 1\big\}.
\end{equation*}
For any $f\in U$ and $t,\ s>0$,
\begin{equation}\label{y}
\begin{array}{ll}
&\quad\ds\big|\mathbb{E}(f(x(t+s;x(0)))-\mathbb{E}(f(x(t;x(0))))\big|\\
&=\ds\big|\mathbb{E}\big[\mathbb{E}\big(f(x(t+s;x(0)))\big|\mathcal{F}_s\big)\big]-\mathbb{E}f(x(t;x(0)))\big|\\
&=\ds\big|\int_0^{+\infty}\mathbb{E}f(x(t;\widetilde{x}(0)))p(s,x(0),d\widetilde{x})-\mathbb{E}f(x(t;x(0)))\big|\\
&\leq\ds\int_0^{+\infty}\big|\mathbb{E}f(x(t;\widetilde{x}(0)))-\mathbb{E}f(x(t;x(0)))\big|p(s,x(0),d\widetilde{x})\\
&\leq\ds\int_{\bar{G}_K}\big|\mathbb{E}f(x(t;\widetilde{x}(0)))-\mathbb{E}f(x(t;x(0)))\big|p(s,x(0),d\widetilde{x})\\
&\quad\ds+2p(s,x(0),G^c_K),
\end{array}
\end{equation}
where $\bar{G}_K=\{x\in \mathbb{R}_+:|x|\leq K\}$ and $G^c_K=\mathbb{R}_+-\bar{G}_K$. Because $\{p(t,x(0),\cdot)\}$ is tight, we can choose $K$ sufficiently large so that $p(s,x(0),G^c_K)<\varepsilon$, $\forall s\geq 0$.

\noindent From (\ref{e1}), for any $\varepsilon>0$ there exists a $T>0$ such that
\begin{equation*}
\sup_{f\in U}\big|\mathbb{E}(f(x(t;\widetilde{x}(0)))-\mathbb{E}(f(x(t;x(0))))\big|\leq\varepsilon,\quad t\geq T.
\end{equation*}
Inserting this estimate into (\ref{y}) gives
\begin{equation*}
\big|\mathbb{E}(f(x(t+s;x(0)))-\mathbb{E}(f(x(t;x(0))))\big|\leq3\varepsilon,\quad t\geq T.
\end{equation*}
Since $f$ is arbitrary,
\begin{equation*}
\sup_{f\in U}\big|\mathbb{E}(f(x(t+s;x(0)))-\mathbb{E}(f(x(t;x(0))))\big|\leq3\varepsilon,\quad t\geq T.
\end{equation*}
Which implies that for all $t\geq T$ and $s>0$,
\begin{equation*}
\mathrm{d}_U(p(t+s,x(0),\cdot),p(t,x(0),\cdot))\leq3\varepsilon.
\end{equation*}
Thus, for each fixed $x(0)\in \mathbb{R}_+$, the family ${p(t, x(0),\cdot)}$ is Cauchy in $\mathscr{P}(\mathbb{R}_+)$.

\noindent Consequently, there exists a unique $\pi\in\mathscr{P}([0,+\infty))=\overline{\mathscr{P}(\mathbb{R}_+)}^{\rm \mathrm{d}_U}$ such that
\begin{equation}\label{z}
\lim_{t\rightarrow+\infty}\mathrm{d}_U(p(t, 1,\cdot),\pi(\cdot))=0.
\end{equation}
From (\ref{e1}) we also have
\begin{equation*}
\lim_{t\rightarrow+\infty}\mathrm{d}_U(p(t,x(0),\cdot),p(t,\psi(0),\cdot))=0,
\end{equation*}
combining  this with (\ref{z}) yields
\begin{equation*}
\begin{array}{ll}
&\ \ds\lim_{t\rightarrow+\infty}\mathrm{d}_U(p(t,x(0),\cdot),\pi(\cdot))\\
&\leq\lim \limits_{t\rightarrow+\infty}\mathrm{d}_U(p(t,x(0),\cdot),p(t,\psi(0),\cdot))
\ds+\lim_{t\rightarrow+\infty}\mathrm{d}_U(p(t,\psi(0),\cdot),\pi(\cdot))\\
&=0.
\end{array}
\end{equation*}
Therefore, the transition probability converges weakly to the unique invariant measure $\pi$, and model (\ref{f01}) is stable in distribution.
\end{proof}

\section{Optimal Harvesting}
\label{4}

In this section, we will state and prove our main results.
\medskip

\begin{theorem}
Let
\begin{equation*}
A=a - \frac{\tau^2}{2} + \int_0^{+\infty} \left[\ln(1 + \sigma z) - \sigma z \mathbf{1}_{\{0<z\leq 1\}}\right] \frac{e^{-\lambda z}}{z^{1+\beta}}\,\mathrm{d}z,
\end{equation*}
For model (\ref{f01}), if $h \in(0,A)$, then \(\Phi>0\) (which means Theorem 2.1(ii) holds) and OHE is
\begin{equation}\label{fd1}
h^*=\frac{A}{2},
\end{equation}
MESY is
\begin{equation}\label{fd2}
Y^*=\frac{A^2}{4b}.
\end{equation}
Moreover, if $h\geq A$, then $\Phi \leq 0$ and there is no optimal harvesting policy for system (\ref{f01}).

\end{theorem}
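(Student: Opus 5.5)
Observe first that $\Phi = A - h$ by the definitions in Theorem 2.1 and in the present statement. Hence $h\in(0,A)$ is equivalent to $\Phi>0$, and $h\ge A$ is equivalent to $\Phi\le 0$. The plan is to compute $Y(h)=\lim_{t\to\infty}\mathbb{E}[h\,x(t)]$ explicitly in terms of $\Phi$, and then maximize a quadratic in $h$.

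\textbf{Step 1 (case $h\in(0,A)$).} By Theorem 3.1 there is a unique stationary distribution $\pi$ on $[0,+\infty)$, and $p(t,x(0),\cdot)$ converges weakly to $\pi$. Lemma 3.1 with $p=2$ gives $\sup_{t}\mathbb{E}[x^2(t)]<\infty$, so $\{x(t)\}$ is uniformly integrable. Therefore
\[
\lim_{t\to\infty}\mathbb{E}[x(t)]=\int_0^{+\infty} y\,\pi(\mathrm{d}y),
\]
where finiteness of this integral follows by Fatou from the same moment bound.

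Next, I identify this mean with the time average. Since the process is Markov with a unique invariant measure, the ergodic theorem gives
\[
\lim_{t\to\infty}\frac{1}{t}\int_0^t x(s)\,\mathrm{d}s=\int_0^{+\infty} y\,\pi(\mathrm{d}y)\quad\text{a.s.}
\]
Comparing with Theorem 2.1(iii), this limit equals $\Phi/b=(A-h)/b$. Hence $Y(h)=h(A-h)/b$. This concave quadratic is maximized at $h^*=A/2$, which lies in $(0,A)$, and the maximum value is $Y^*=A^2/(4b)$. This proves (\ref{fd1}) and (\ref{fd2}).

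\textbf{Step 2 (case $h\ge A$).} Here $\Phi\le0$. If $\Phi<0$, Theorem 2.1(i) gives $x(t)\to0$ a.s., and uniform integrability then gives $\mathbb{E}x(t)\to0$. If $\Phi=0$, Theorem 2.1(ii) gives a zero time average. Combined with the stationary-distribution argument of Step 1 (the ergodic limit identification), this forces $\int y\,\pi(\mathrm{d}y)=0$, so again $\mathbb{E}x(t)\to0$. In either case $Y(h)=0$. Since $Y>0$ on $(0,A)$, no $h\ge A$ can be optimal, and the optimal policy must lie in $(0,A)$, where Step 1 applies.

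\textbf{Main obstacle.} The delicate point is the ergodic identification: justifying that the a.s. time average of Theorem 2.1 coincides with the mean of $\pi$. The mean of $\pi$ is what governs $\lim\mathbb{E}x(t)$ via weak convergence plus uniform integrability.

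My first attempt would use a direct route that avoids a general ergodic theorem. Take expectations in the It\^o formula for $\ln x(t)$ (equation (\ref{f34})), divide by $t$, and let $t\to\infty$. This gives
\[
\frac{1}{t}\int_0^t\mathbb{E}x(s)\,\mathrm{d}s\to\frac{\Phi}{b}.
\]
To justify this, I need $\mathbb{E}\ln x(t)/t\to0$. The upper bound on this quantity comes from Lemma 3.1. The lower bound requires a bound on $\mathbb{E}[x^{-\theta}(t)]$, which I would obtain when $\Phi>0$ via a Lyapunov function $x^{-\theta}$ with small $\theta$.

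Since $\mathbb{E}x(t)$ converges, its Ces\`aro mean has the same limit, so $\lim\mathbb{E}x(t)=\Phi/b$. When $\Phi=0$, the Ces\`aro-limit step is replaced by the fact that $\mathbb{E}\ln x(t)\le C$ gives only an upper bound. That upper bound, $\limsup\frac{1}{t}\int_0^t \mathbb{E}x\,\mathrm{d}s\le 0$, suffices because $x\ge0$.
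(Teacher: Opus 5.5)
Your Steps 1--2 follow the paper's proof. You use the stationary distribution of Theorem 3.1 to identify $\lim_{t\to\infty}\mathbb{E}x(t)$ with the almost-sure time average $\Phi/b$ of Theorem 2.1(iii), and then maximize $h(A-h)/b$. You are more careful than the paper here. The paper never justifies passing from weak convergence to convergence of $\mathbb{E}x(t)$, nor from $x(t)\to0$ a.s.\ to $\mathbb{E}x(t)\to0$; your uniform-integrability remark based on Lemma 3.1 supplies both.

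The fallback ``direct route'' you propose for the ergodic identification fails as written when $\Phi=0$. Taking expectations in (\ref{f34}) gives
\[
\frac{b}{t}\int_0^t\mathbb{E}x(s)\,\mathrm{d}s=\Phi+\frac{\ln x(0)-\mathbb{E}\ln x(t)}{t}.
\]
So an upper bound $\mathbb{E}\ln x(t)\le C$ yields $\liminf_{t\to\infty}\frac{1}{t}\int_0^t\mathbb{E}x(s)\,\mathrm{d}s\ge\Phi/b$. That is a \emph{lower} bound, and for $\Phi=0$ it is vacuous. The $\limsup\le0$ you need requires $\mathbb{E}\ln x(t)\ge-o(t)$, which is the hard direction. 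Your $x^{-\theta}$ Lyapunov estimate is unavailable there: the coefficient of $x^{-\theta}$ in $\mathcal{L}x^{-\theta}$ is $-\theta\Phi+O(\theta^2)$, which is positive for small $\theta$ when $\Phi=0$.

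A repair that avoids both the ergodic theorem and negative moments runs as follows. The proof of Lemma 3.1 actually gives $\sup_{t\ge0}\mathbb{E}x^2(t)\le\max\{x(0)^2,K_2\}$. Hence $\mathbb{E}\bigl(\frac{1}{t}\int_0^t x(s)\,\mathrm{d}s\bigr)^2\le\sup_{s\ge0}\mathbb{E}x^2(s)$, so the time averages are uniformly integrable. Theorem 2.1(ii)--(iii) and Vitali's theorem then give $\frac{1}{t}\int_0^t\mathbb{E}x(s)\,\mathrm{d}s\to\Phi/b$ for every $\Phi\ge0$. Since $\lim_{t\to\infty}\mathbb{E}x(t)$ exists by your Step 1, it must equal this Ces\`aro limit.

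This also removes a soft point that you and the paper share. An ergodic theorem for $\pi$ controls time averages of the stationary process (equivalently, for $\pi$-a.e.\ starting point), not for each fixed $x(0)\in\mathbb{R}_+$.
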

\medskip

\begin{proof}[$\mathbf{Proof\ of\ Theorem\ 4.1}$] From Theorem 3.1 we know that model (\ref{f01}) has a unique invariant measure \(\pi(\cdot)\) which is ergodic
by theorem 3.2.6 in \citep{a5}, that is to say if \(g(u)\) is an integrable function with respect to \(\pi(\cdot)\), one have
\begin{equation*}
 \lim_{t\rightarrow +\infty}\frac{1}{t} \int_0^t g(x(s))\,\mathrm{d}s = \int_0^{+\infty} g(u)\,\pi(\mathrm{d}u), \quad \text{a.s.}.
\end{equation*}
We note that
\begin{equation*}
 E(g(X(\omega)))=\int_{\Omega}g(X(\omega))\,\mathrm{d}P(\omega) = \int_0^{+\infty} g(u)\,\pi(\mathrm{d}u)
\end{equation*}
So
\begin{equation}\label{fd4}
 \lim_{t\rightarrow +\infty}E(g(x(t)))=E(g(X(\omega)))=\lim_{t\rightarrow +\infty}\frac{1}{t} \int_0^t g(x(s))\,\mathrm{d}s
\end{equation}
We now prove that the function $ g(u) = u $ is integrable with respect to the invariant measure $ \pi(\cdot) $, i.e.,
\[
\int_0^{+\infty} u\,\pi(\mathrm{d}u) < \infty.
\]
Define the truncated function $ g_N(u) := u \wedge N $ for each integer $ N > 0 $. Since $ g_N(u) \leq N $, it is bounded and hence integrable with respect to $ \pi(\cdot) $. Therefore,
\begin{equation}\label{fd5}
\int_0^{+\infty} g_N(u)\,\pi(\mathrm{d}u) = \lim_{t \to \infty} \frac{1}{t} \int_0^t g_N(x(s))\,\mathrm{d}s.
\end{equation}
Since $ g_N(x(t)) = x(t) \wedge N \leq x(t) $, by (iii) in Theorem 2.1, it follows that
\[
\lim_{t \to \infty} \frac{1}{t} \int_0^t g_N(x(s))\,\mathrm{d}s \leq \lim_{t \to \infty} \frac{1}{t} \int_0^t x(s)\,\mathrm{d}s = \frac{\Phi}{b}, \quad \text{for all } t \geq 0.
\]
Then, from (\ref{fd5}), we obtain
\begin{equation}\label{fd6}
\int_0^{+\infty} g_N(u)\,\pi(\mathrm{d}u) = \lim_{t \to \infty} \frac{1}{t} \int_0^t g_N(x(s))\,\mathrm{d}s \leq \frac{\Phi}{b}.
\end{equation}
Observe that $ g_N(u) \uparrow u $ as $ N \to +\infty $ for each $ u \in \mathbb{R}_+ $. By the Monotone Convergence Theorem, we have
\[
\int_0^{+\infty} u\,\pi(\mathrm{d}u) = \lim_{N \to \infty} \int_0^{+\infty} g_N(u)\,\pi(\mathrm{d}u) .
\]
Therefore, from (\ref{fd6}), we obtain
\[
\int_0^{+\infty} u\,\mu(\mathrm{d}u)\leq \frac{\Phi}{b} < \infty.
\]
(\ref{fd4}) together with (iii) in Theorem 2.1 show that
\begin{equation}\label{fd7}
\begin{array}{rcl}
Y(h)&=&\ds\lim\limits_{t\rightarrow+\infty}\mathbb{E}(hx(t))\\
&=&\ds h\lim\limits_{t\rightarrow+\infty}\mathbb{E}(x(t))\\
&=&\ds h\lim_{t\rightarrow +\infty}\frac{1}{t} \int_0^t x(s)\,\mathrm{d}s\\
&=&\ds\frac{h\Phi}{b}\\
&=&\ds\frac{h(a - h - \frac{\tau^2}{2} + \int_0^{+\infty} \left[\ln(1 + \sigma z) - \sigma z \mathbf{1}_{\{0<z\leq 1\}}\right] \frac{e^{-\lambda z}}{z^{1+\beta}}\,\mathrm{d}z)}{b}\\
&=&\ds\frac{h(A - h)}{b}.
\end{array}
\end{equation}
It is easy to know that $Y(h)$ attains its unique extreme value at
\[
h^*=\frac{A}{2}.
\]
and MESY (\ref{fd2}) follows directly.

Furthermore, if $\Phi \leq 0$, no effective optimal harvesting policy exists. When $\Phi = 0$, we have $Y(h) = 0$ for all $h \geq 0$ due to (\ref{fd7}) and Theorem~2.1(ii); When $\Phi < 0$, the population goes extinct almost surely by Theorem~2.1(i), leading to zero yield. Thus, optimal harvesting is only well-defined when $\Phi > 0$.
\end{proof}
\medskip

\rm
\section{Numerical Simulations}
\label{6}

In this section, we conduct numerical simulations to validate the theoretical results derived in the previous sections and to provide a quantitative basis for subsequent analysis. The stochastic differential equation (\ref{f01}) is discretized using the Euler--Maruyama scheme~\citep{a6}, and all computations are implemented in Python. The time step is set to $\Delta = 0.001$, and the total simulation horizon is $T = 2000$ to ensure that the system reaches its stationary regime.

Unless otherwise stated, the baseline parameter values are chosen as
\[
a = 1.5,\quad b = 0.1,\quad \beta = 0.7,\quad \lambda = 1.0,\quad
\tau^2 = 0.01,\quad \sigma = 0.005.
\]
The initial condition is set to $x(0) = 1.0$.

\subsection*{(1) Analysis of the Impact of Control Variable $h$}

We first verify the existence of an optimal harvesting strategy and examine the impact of harvesting effort on population dynamics. Based on equations (\ref{fd1}) and (\ref{fd2}), under the baseline parameter setting, OHE is computed as $h^* = 0.75$, corresponding to MESY $Y^* = 5.60$.

To illustrate the population trajectories and sustainable yields under different harvesting intensities, four representative harvesting effort levels are selected for simulation, and the results are summarized in Table~1. The sign of the key parameter $\Phi$ determines the long-term behavior of the system (Theorem~2.1).

\begin{table}[h!]
\centering
\caption{System response under different harvesting efforts (baseline parameters).}
\small
\begin{tabular}{|c|c|c|c|c|c|}
\hline
 $h$ & vs. $h^*$ & $\Phi$ & Theoretical Behavior & $\frac{1}{T}\int_0^T x(t)dt$ &$Y(h)$ \\
\hline
0.22 & $< h^*$ & 1.27 & Persistence in the mean & $12.72$ & $2.85$ \\
\hline
0.75 & $= h^*$ & 0.75 & Persistence in the mean & $7.48$ & $5.60$ \\
\hline
1.12 & $> h^*$ & 0.37 & Persistence in the mean & $3.74$ & $4.20$ \\
\hline
1.65 & $> h^*$ & -0.15 & Extinction & (tends to 0) & $0.00$ \\
\hline
\end{tabular}
\end{table}

\begin{figure}[htbp]
  \centering
  \footnotesize
  \includegraphics[width=0.8\textwidth]{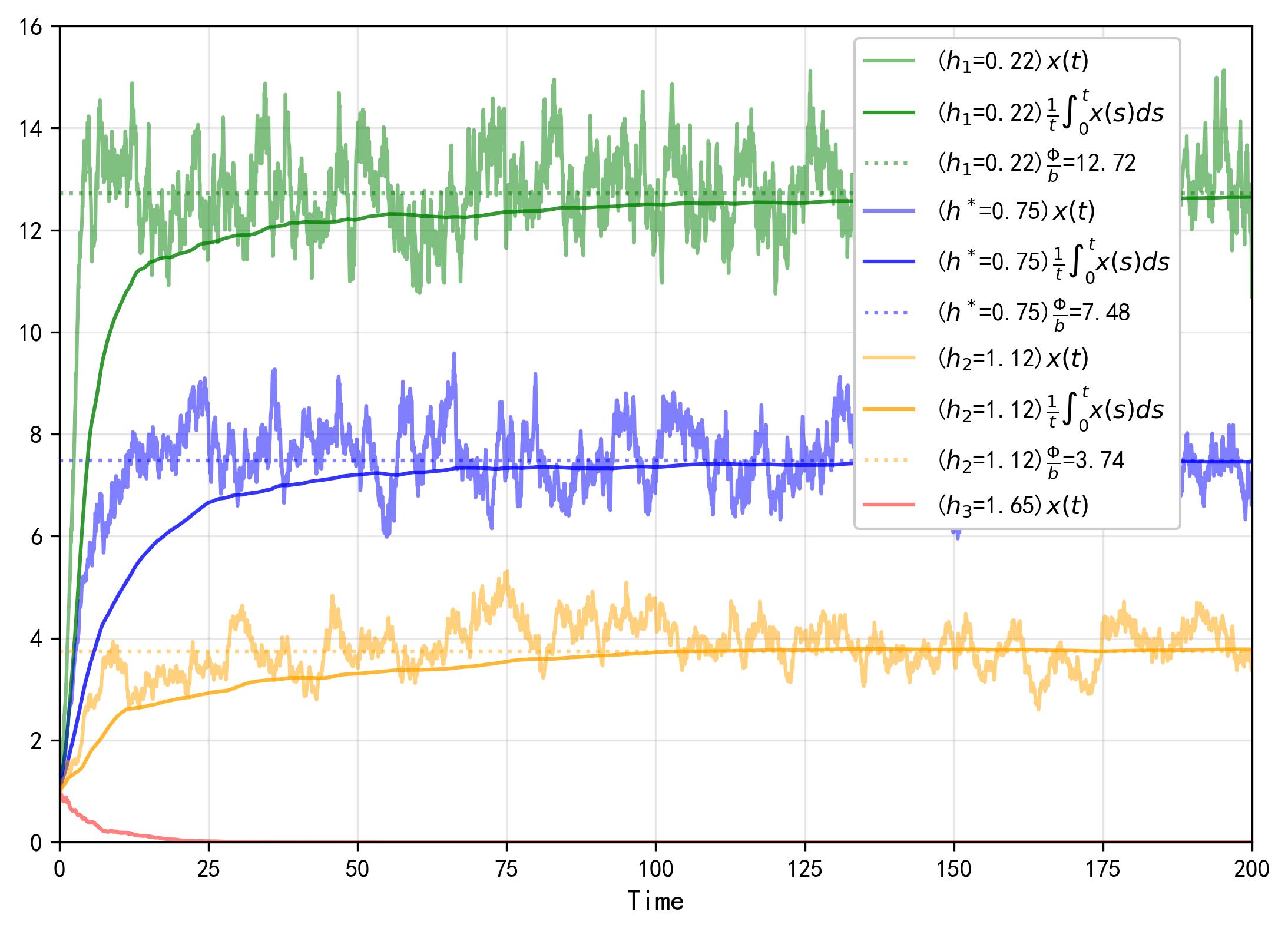}
  \caption{\footnotesize Population dynamics under different harvesting efforts.}
  \label{fig:FIG1}
\end{figure}

\begin{figure}[htbp]
  \centering
  \includegraphics[width=0.8\textwidth]{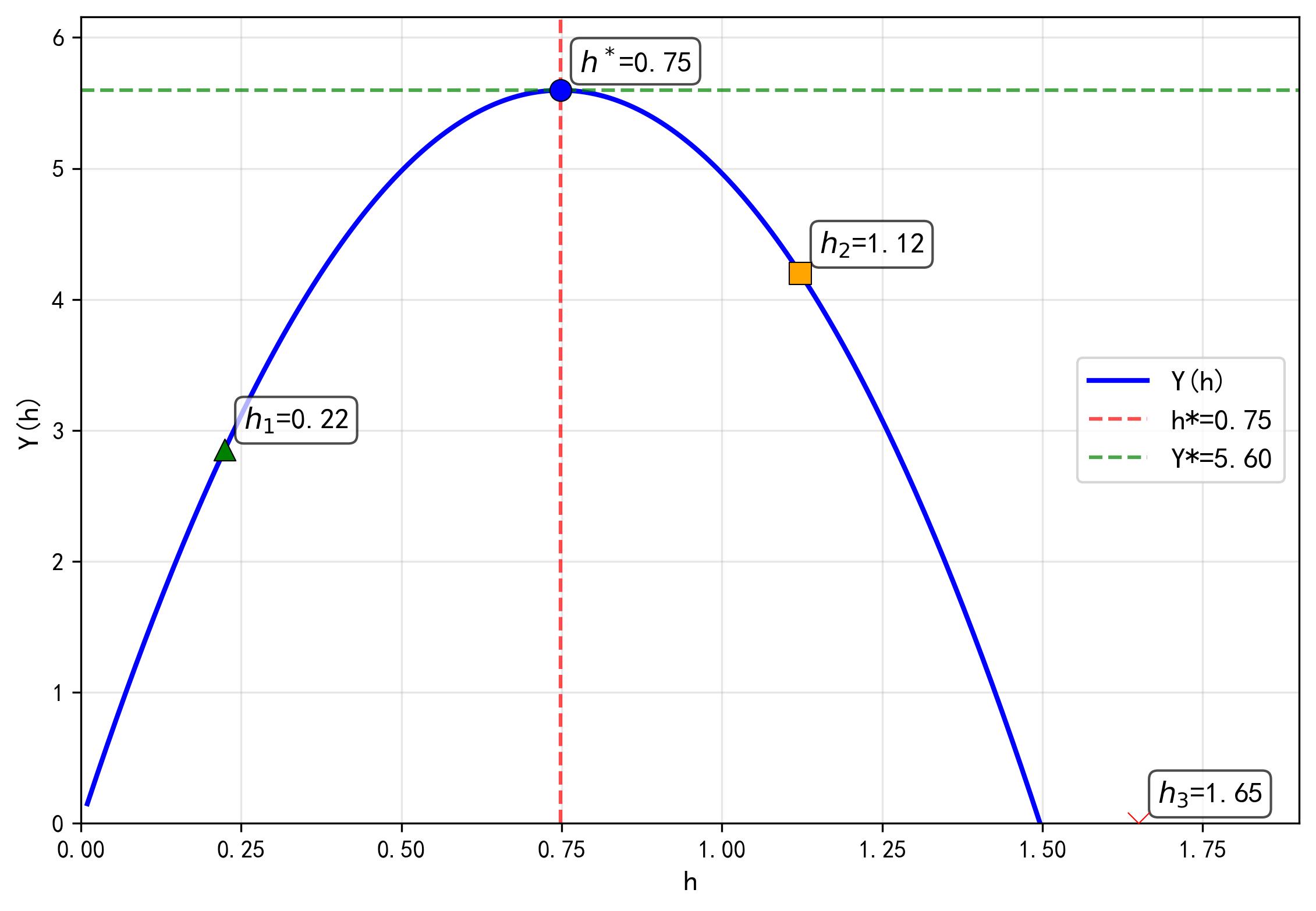}
  \caption{\footnotesize Optimal harvesting policy $Y(H)$ as a function of $h$.}
  \label{fig:FIG2}
\end{figure}

\begin{itemize}
    \item \textbf{\cref{fig:FIG1}} displays typical sample paths of the population process $x(t)$ under the four scenarios listed in Table~1. As shown, when $\Phi > 0$, the population fluctuates around a positive equilibrium level under stochastic perturbations; when $\Phi < 0$ (e.g., $h = 1.65$), the population declines to extinction within finite time. These simulation results are fully consistent with the theoretical predictions of Theorem~2.1.

    \item \textbf{\cref{fig:FIG2}} plots the theoretical ESY function $Y(h)$ over the range $h \in [0, 1.9]$, as given in Theorem~4.1. The theoretical optimal point $(h^*, Y^*) = (0.75, 5.60)$ is explicitly marked. For validation, ESY corresponding to the four representative harvesting efforts in Table~1 are superimposed as scatter points. The simulation results closely match the theoretical curve, providing strong numerical support for equations (\ref{fd1}) and (\ref{fd2}). Moreover, the figure clearly demonstrates the concavity of $Y(h)$ with respect to $h$: the yield increases with harvesting effort initially, reaches its maximum at $h^*$, and then decreases thereafter.
\end{itemize}

\subsection*{(2) Sensitivity Analysis with Respect to Noise Intensities $\tau^2$ and $\sigma$}

This subsection investigates the influence of environmental noise intensities on the optimal management strategy. Keeping the structural parameters fixed at $(\beta, \lambda) = (0.7, 1.0)$, we separately examine the effects of continuous noise intensity $\tau^2$ and jump noise intensity $\sigma$.

\begin{figure}[htbp]
  \centering
  \includegraphics[width=0.9\textwidth]{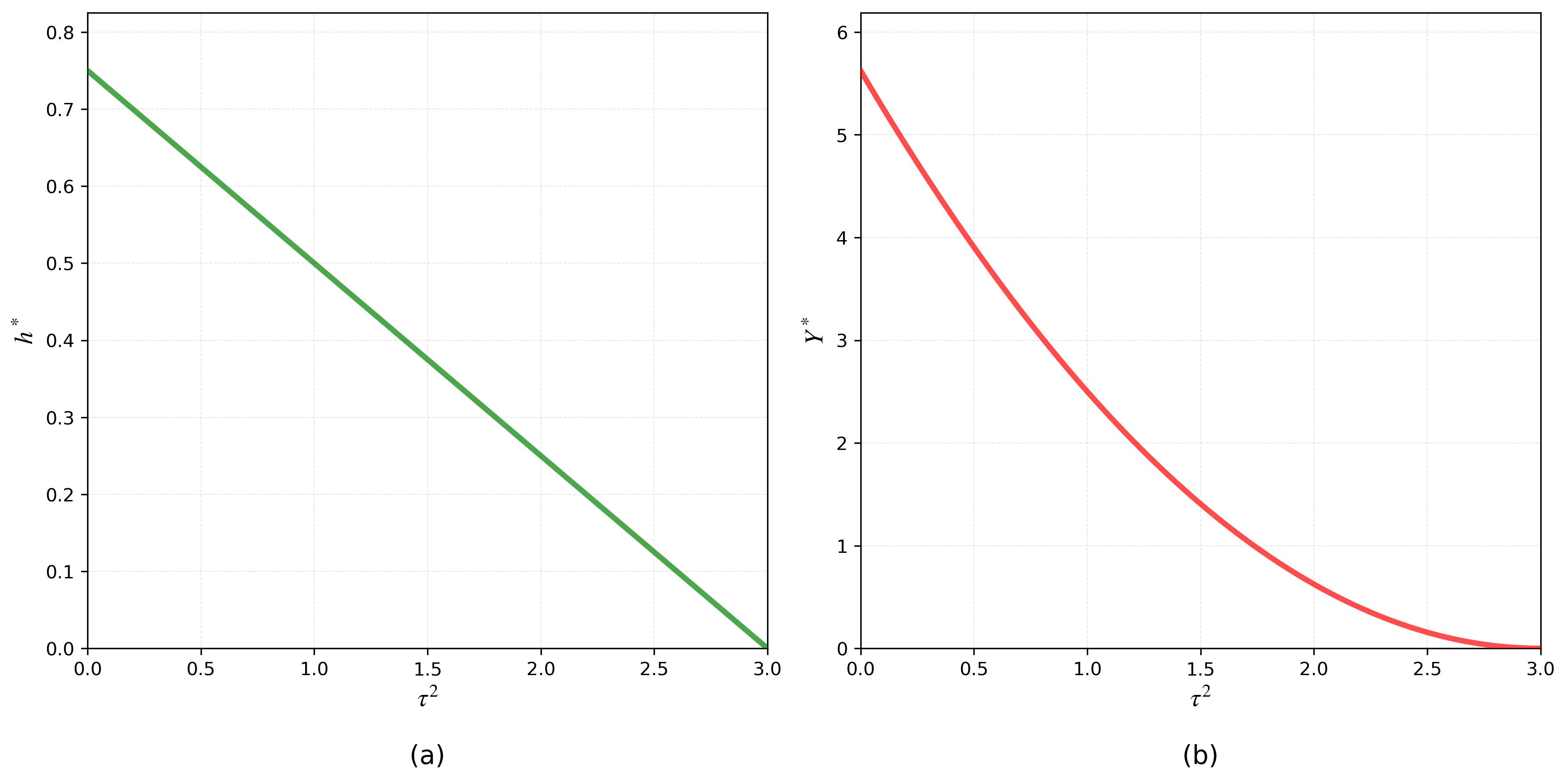}
  \caption{\footnotesize Dependence of OHE $h^*$ and MESY $Y^*$ on white noise intensity $\tau^2$.}
  \label{fig:FIG3}
\end{figure}

\begin{figure}[htbp]
  \centering
  \includegraphics[width=0.9\textwidth]{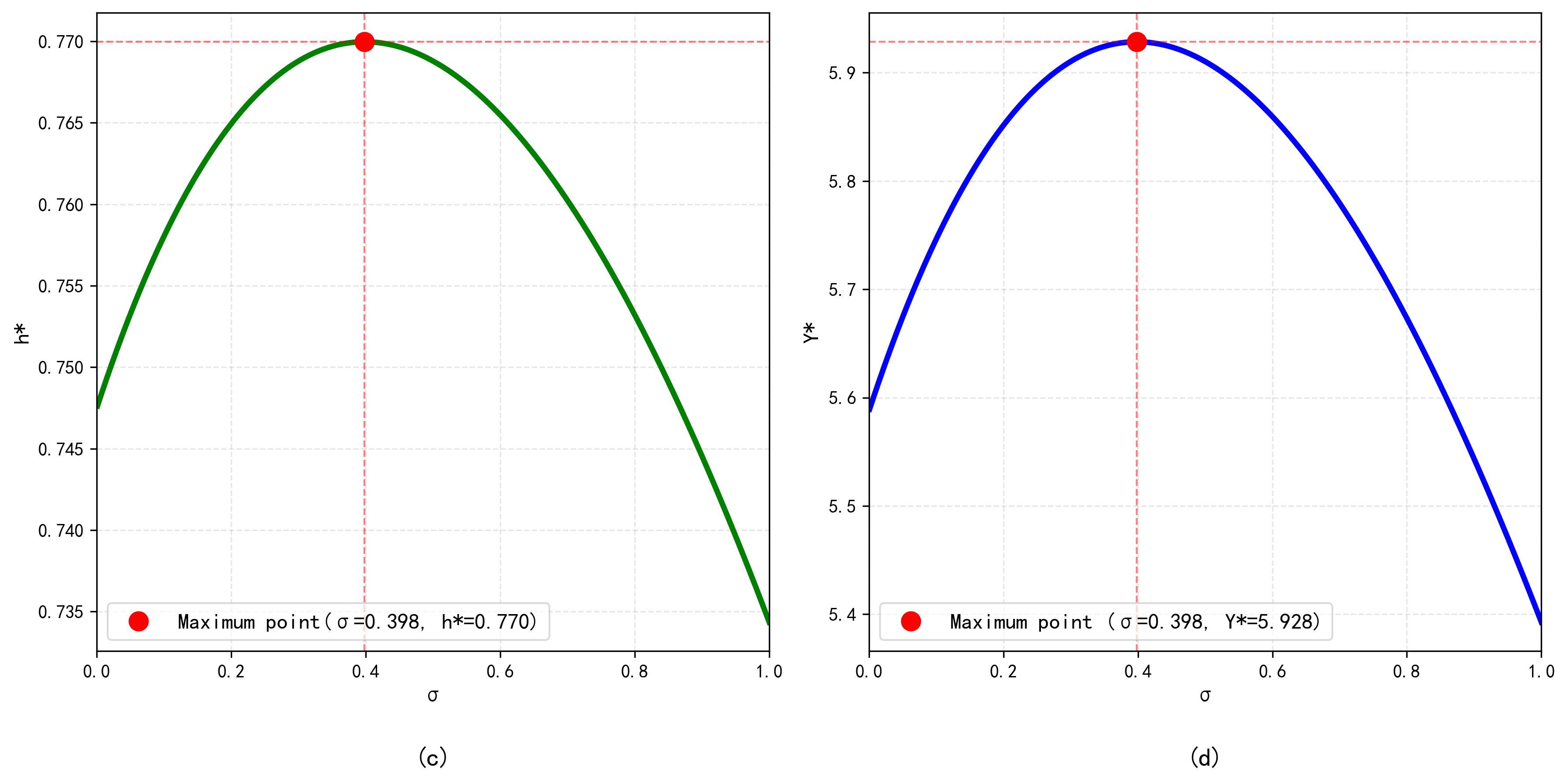}
  \caption{\footnotesize Dependence of OHE $h^*$ and MESY $Y^*$ on jump noise intensity $\sigma$.}
  \label{fig:FIG4}
\end{figure}

\begin{itemize}
    \item \textbf{\textbf{\cref{fig:FIG3}}} illustrates the impact of continuous noise. Subfigure~(a) shows the variation of OHE $h^*$ as a function of $\tau^2$, while subfigure~(b) presents the corresponding MESY $Y^*$. In these simulations, $\sigma$ is fixed at $0.005$.

    \item \textbf{\textbf{\cref{fig:FIG4}}} depicts the influence of jump noise. Subfigure~(c) shows how $h^*$ varies with $\sigma$, and subfigure~(d) shows the associated changes in $Y^*$. In this case, $\tau^2$ is fixed at $0.01$.
\end{itemize}

\subsection*{(3) Sensitivity Analysis with Respect to Structural Parameters $(\beta, \lambda)$}

This part constitutes the core of the present study and aims to reveal the complex effects of intrinsic structural characteristics, captured by the parameters $\beta$ and $\lambda$, on the optimal management strategy. With noise intensities fixed at their baseline values $(\tau^2 = 0.01, \sigma = 0.005)$, we perform a systematic scan over the parameter domain
\[
\Omega = \left\{ (\beta, \lambda) \mid \beta \in (0, 2.0),\ \lambda \in (0, 4.0) \right\}.
\]

\begin{figure}[htbp]
  \centering
  \includegraphics[width=0.7\textwidth]{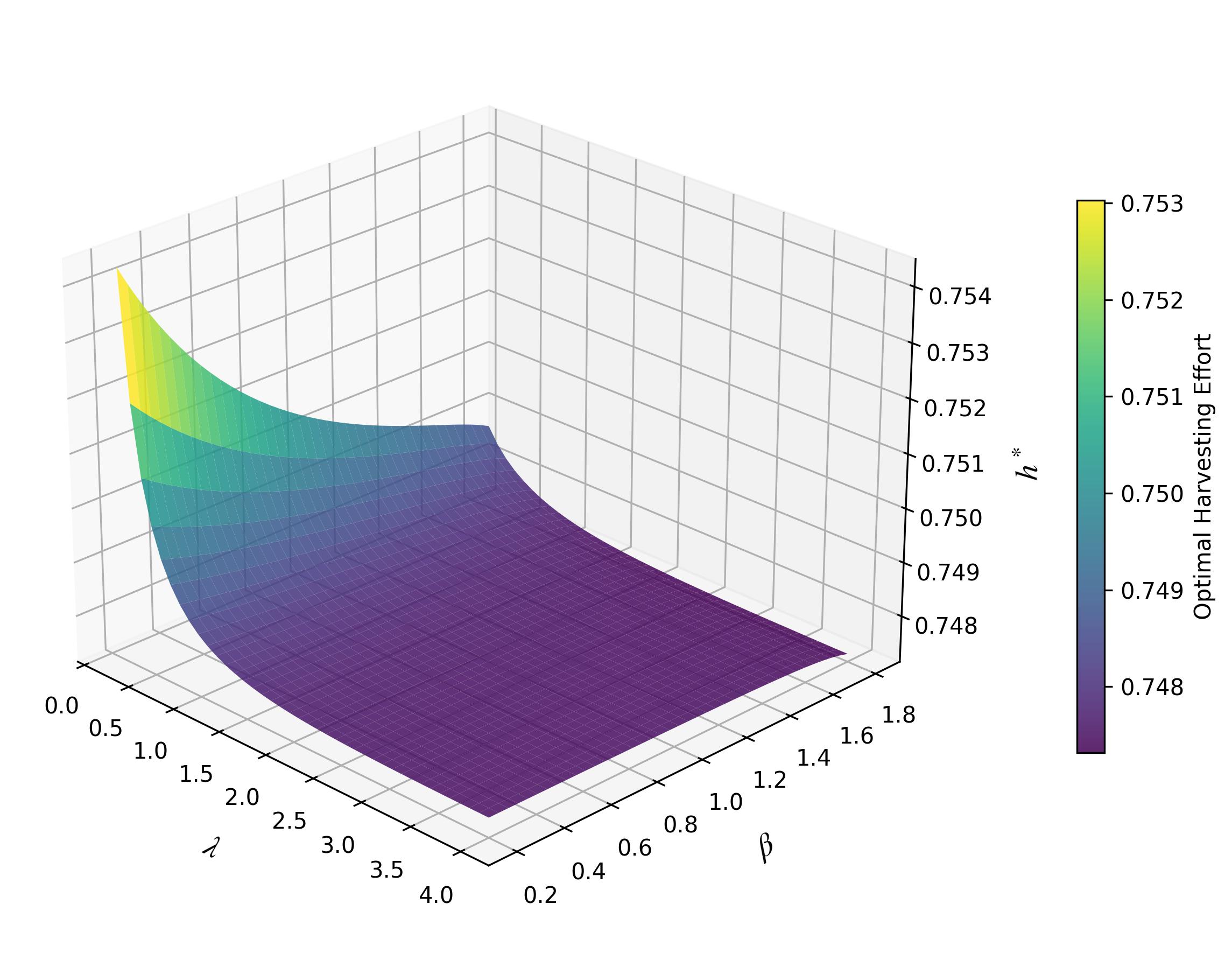}
  \caption{\footnotesize Dependence of OHE $h^*$ on ($\beta$,$\lambda$).}
  \label{fig:FIG5}
\end{figure}

\begin{figure}[htbp]
  \centering
  \includegraphics[width=0.7\textwidth]{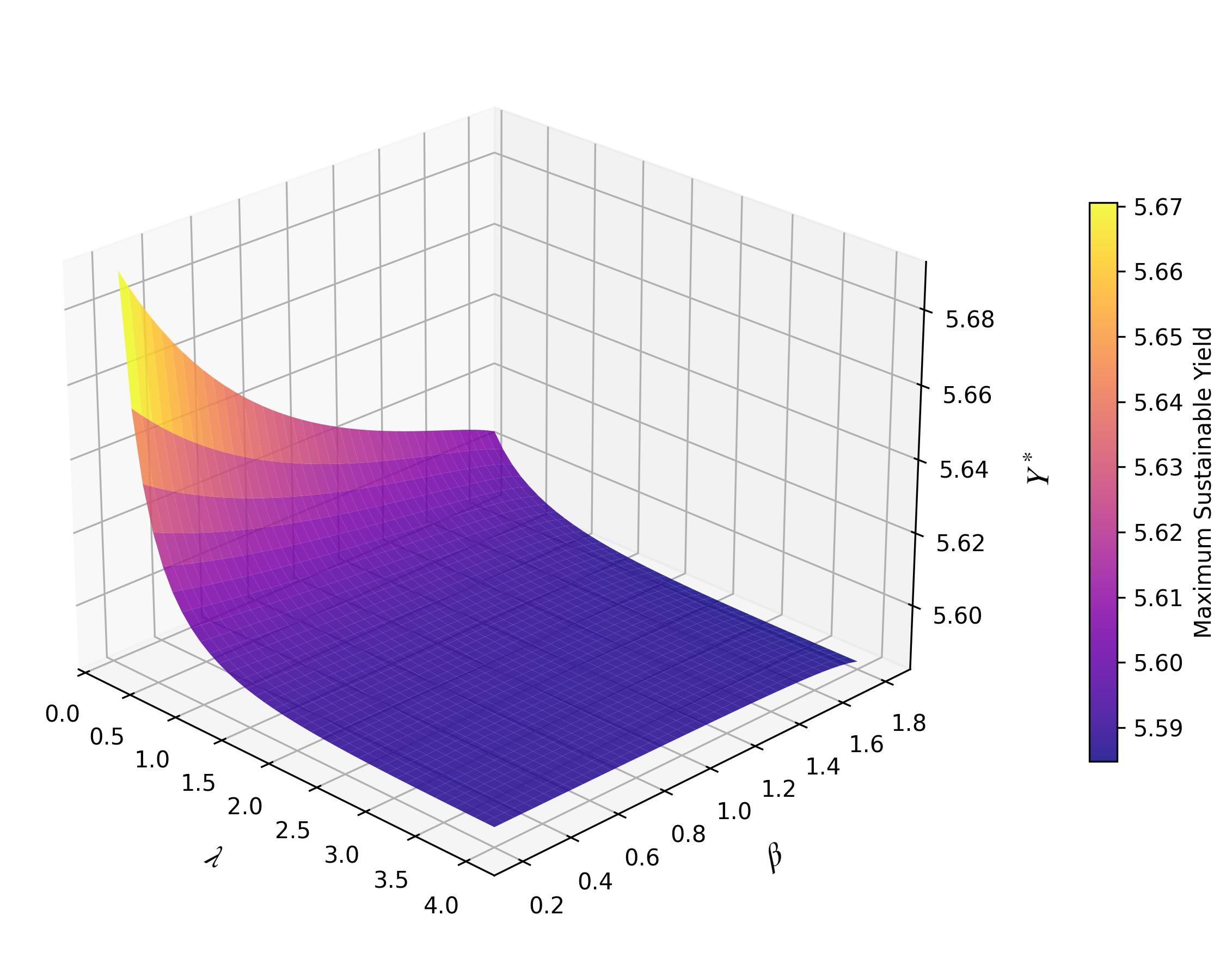}
  \caption{\footnotesize Dependence of MESY $Y^*$ on ($\beta$,$\lambda$).}
  \label{fig:FIG6}
\end{figure}

\begin{itemize}
    \item \textbf{\textbf{\cref{fig:FIG5}}} presents a heat map of OHE $h^*$ over the $(\beta, \lambda)$ parameter plane. The figure clearly reveals the nonlinear dependence of $h^*$ on the structural parameters. The color gradient ranges from purple (low values) to yellow (high values), indicating the magnitude of $h^*$.

    \item \textbf{\textbf{\cref{fig:FIG6}}} shows the distribution of MESY $Y^*$ over the same $(\beta, \lambda)$ plane using an identical visualization scheme. By comparing this with \textbf{\cref{fig:FIG5}}, one can directly contrast the effects of structural parameters on OHE $h^*$ and MESY $Y^*$.
\end{itemize}

Overall, the numerical results presented in this section provide a solid empirical foundation for the analysis and discussion in the next section. The various dependencies and patterns revealed in Figures~1--6 will be examined in depth in Section~6, from which clear management implications will be distilled.

\medskip

\rm
\section{Analysis and Conclusions}
\label{7}

In this paper, we formulate a stochastic Logistic model driven jointly by Brownian motion and a one-sided truncated stable L\'evy process, and systematically investigate the optimal harvesting problem for renewable resources under a composite noise environment. We establish threshold conditions for population persistence and extinction (Theorem~2.1), derive explicit analytical expressions for the optimal harvesting effort $h^*$ and MESY $Y^*$ (Theorem~4.1), and employ numerical simulations (\cref{fig:FIG1,fig:FIG2,fig:FIG3,fig:FIG4,fig:FIG5,fig:FIG6}) to reveal the following key findings.

\subsection*{(1) Dual Effects of Harvesting Effort $h$ and a Management Decision Framework}

Harvesting effort $h$ exerts a dual influence on population dynamics:
\begin{itemize}
  \item \textbf{Production effect}: Moderate harvesting enhances resource utilization by regulating population density and alleviating intraspecific competition, thereby increasing per-capita productivity and harvest yield.
  \item \textbf{Extinction effect}: Excessive harvesting directly removes too many individuals, reduces the population base, weakens reproductive potential, and may push the population toward extinction.
\end{itemize}
The balance between these opposing forces determines the properties of ESY $Y(h)$.

Theoretical analysis shows that a key parameter $\Phi$ governs the long-term fate of the population (\cref{fig:FIG1}): when $\Phi > 0$, the population persists in mean; when $\Phi < 0$, the population tends toward extinction. Within this safety boundary, ESY $Y(h)$ is a concave function of $h$, admitting a unique maximizer $h^*$ that achieves the maximum yield (\cref{fig:FIG2}).

These results provide a clear management framework: decision-makers must strike a balance between ecological constraints, keeping $h$ strictly below the critical threshold to ensure long-term population persistence, and economic optimality, adjusting $h$ toward $h^*$ to maximize sustainable yield.

\subsection*{(2) Differential Impacts and Critical Phenomena Induced by Environmental Noises $\tau^2$ and $\sigma$}

The effects of continuous environmental noise with intensity $\tau^2$ and jump noise with intensity $\sigma$ on the optimal harvesting strategy exhibit fundamentally different mechanisms.

\begin{itemize}
  \item \textbf{Effect of $\boldsymbol{\tau^2}$.} The continuous noise intensity $\tau^2$ exerts a strictly monotonic negative impact on both OHE $h^*$ and MESY $Y^*$ (see \cref{fig:FIG3}):
      \begin{equation*}
      \frac{\partial h^*}{\partial (\tau^2)} = -\frac{1}{4} < 0, \qquad
      \frac{\partial Y^*}{\partial (\tau^2)} = -\frac{h^*}{4b} < 0.
      \end{equation*}
      This result indicates that increasing environmental variability reduces both the optimal harvesting intensity and the achievable sustainable yield, implying that more conservative harvesting policies are required under stronger continuous fluctuations.
  \item \textbf{Effect of $\boldsymbol{\sigma}$.} In contrast, the influence of jump noise intensity $\sigma$ on $h^*$ and $Y^*$ is non-monotonic (see \cref{fig:FIG4}):
      \begin{equation*}
      \frac{\partial h^*}{\partial \sigma} = \frac{1}{2} I'(\sigma), \qquad
      \frac{\partial Y^*}{\partial \sigma} = \frac{2h^*}{b}\frac{\partial h^*}{\partial \sigma},
      \end{equation*}
      where
      \begin{equation*}
      I'(\sigma)
      = \int_0^\infty
      \left[
      \frac{1}{1+\sigma z}
      - \mathbb{I}_{\{0<z\le 1\}}
      \right]
      \frac{e^{-\lambda z}}{z^{\beta}} \, \mathrm{d}z .
      \end{equation*}

      The analysis reveals the existence of a unique critical value $\sigma_0 > 0$ such that
      \begin{equation*}
      \frac{\partial h^*}{\partial \sigma}
      \begin{cases}
      > 0, & \sigma < \sigma_0, \\[4pt]
      = 0, & \sigma = \sigma_0, \\[4pt]
      < 0, & \sigma > \sigma_0 .
      \end{cases}
      \end{equation*}
      Consequently, moderate jump disturbances ($\sigma < \sigma_0$) enhance both OHE and MESY, whereas excessive jump intensity ($\sigma > \sigma_0$) suppresses them simultaneously.
\end{itemize}

This finding uncovers a non-monotonic regulatory effect of environmental perturbations on sustainable population yield. To our knowledge, it provides the first rigorous mathematical characterization and computable critical threshold for such phenomena within a stochastic optimal harvesting framework, offering new quantitative insights into the role of environmental shocks in ecological resource management.

\subsection*{(3) Effects and mechanisms of intervention structure parameters $(\beta,\lambda)$}

The statistical structure parameters $(\beta,\lambda)$ governing management interventions play a fundamental role in determining system potential. Numerical results (\cref{fig:FIG5,fig:FIG6}) indicate that both parameters significantly influence the optimal harvesting level $h^*$ and the sustainable yield $Y^*$.

\begin{itemize}
    \item \textbf{Effect of $\boldsymbol{\lambda}$.} Increasing $\lambda$ significantly reduces both $h^*$ and $Y^*$. This is because a larger $\lambda$ imposes a stricter upper bound on individual intervention sizes, limiting the system's ability to respond to sudden disturbances and thereby weakening its recovery capacity. As a result, harvesting strategies must become more conservative.
    \item \textbf{Effect of $\boldsymbol{\beta}$.} Increasing $\beta$ also leads to a monotonic decline in $h^*$ and $Y^*$. From a mechanistic perspective, increasing \( \beta \) shifts the L\'evy measure toward small jumps. Because the L\'evy measure assigns more weight to small jumps as \(\beta\) increases, and the integrand \( \ln(1 + \sigma z) - \sigma z\) is negative for small \(z\), the integral becomes more negative, thereby reducing \(\Phi\). From a biological perspective, increasing \( \beta \) corresponds to more frequent but smaller-scale interventions. Every increase in density intensifies the competitive term \( -bx^2 \). Compared with low-frequency large-scale interventions, high-frequency small-scale interventions yield lower marginal benefit, while the competitive cost accumulates continuously due to frequent interventions. Overall, random and frequent small interventions reduce both optimal harvesting levels and long-term population performance.
\end{itemize}

It should be emphasized that the jump term represents stochastic interventions rather than a deterministic increase in population size. Although all jumps are positive, their net effect on population growth is not necessarily positive due to the combined influence of stochasticity and nonlinear dynamics. In particular, frequent small-scale interventions may fail to accumulate effectively and may even reduce the overall growth capacity of the system. This theoretical inference is supported by empirical evidence from fishery stock enhancement: Taylor et al. \citep{a7} demonstrated that repeated high-frequency releases of juvenile mulloway at the same site (analogous to large \(\beta\)) led to faster emigration, greater use of sub-optimal habitats, and higher cumulative predation risk compared to single, well-dispersed releases, ultimately resulting in greater losses from the stocked system.

Moreover, significant interaction effects exist between $\beta$ and $\lambda$, implying that these parameters must be designed in a coordinated manner. Table 2 summarizes intervention paradigms under different $(\beta,\lambda)$ combinations and their corresponding management recommendations.

\begin{table}[h!]
\centering
\caption{Intervention management strategies based on $(\beta,\lambda)$ parameters.}
\small
\setlength{\tabcolsep}{3pt}
\begin{tabular}{|l|l|l|l|}
\hline
\makecell{Parameter\\Combination} &
\makecell{Management\\Paradigm} &
\makecell{Harvesting\\Strategy} &
\makecell{Typical\\Scenario} \\
\hline
\makecell{Small $\beta$\\Small $\lambda$} &
\makecell{Low-frequency,\\ relatively large-scale\\ + \\Large-scale emergency-ready} &
\makecell{Relatively high\\($h^*$ larger)} &
\makecell{Chinese sturgeon: \\breeding-season stocking + \\emergency fund for \\pollution events} \\
\hline
\makecell{Small $\beta$\\Large $\lambda$} &
\makecell{Low-frequency,\\ relatively large-scale\\ +\\Scale-constrained} &
\makecell{Moderately\\conservative\\($h^*$ medium)} &
\makecell{Qinghai Lake naked carp:\\ breeding-season stocking, \\limited hatchery capacity} \\
\hline
\makecell{Large $\beta$\\Small $\lambda$} &
\makecell{High-frequency, small-scale\\ +\\Large-scale emergency-ready} &
\makecell{Conservative,\\dynamic\\($h^*$ lower)} &
\makecell{Urban rivers: weekly \\stocking + emergency\\ reserve for pollution events} \\
\hline
\makecell{Large $\beta$\\Large $\lambda$} &
\makecell{High-frequency, small-scale\\+\\Scale-constrained} &
\makecell{Highly\\conservative\\($h^*$ lowest)} &
\makecell{Reservoirs: monthly\\ fixed-quota stocking of\\ filter-feeding fish\\ for algae control} \\
\hline
\end{tabular}
\end{table}

In summary, optimal resource management requires coherence across three levels: ensuring harvesting effort does not exceed the survival threshold ($\Phi > 0$), using $h^*$ as an operational benchmark; adjusting strategies in response to environmental noise intensities ($\tau^2$ and $\sigma$), with jump disturbances tuned toward the optimal level $\sigma_0$; and optimizing intervention structures $(\beta, \lambda)$ to enhance system resilience. This reflects a paradigm shift from passive adaptation to environmental uncertainty toward active system design.

Future research may extend this framework in several directions. First, the single-species model can be generalized to multi-species systems to examine how trophic cascades, interspecific competition, and mutualistic interactions affect optimal management strategies. Second, calibrating and validating the model using empirical data from specific ecosystems, such as fisheries, forests, or protected areas, would substantially enhance predictive accuracy and credibility. Finally, developing decision-support tools or case-based management libraries for practitioners would facilitate the translation of theoretical results into adaptive management strategies applicable in complex and stochastic environments. Together, these efforts will help bridge theory and practice and provide robust scientific foundations for achieving ecological and economic sustainability under uncertainty.
\medskip



\rm

\end{document}